\numberwithin{equation}{section}
\theoremstyle{theorem}
\newtheorem{theorem}{Theorem}[section]
\newtheorem{lemma}[theorem]{Lemma}
\newtheorem{proposition}[theorem]{Proposition}
\newtheorem{corollary}[theorem]{Corollary}
\newtheorem{conjecture}[theorem]{Conjecture}
\theoremstyle{definition}
\newtheorem{definition}[theorem]{Definition}
\newtheorem{example}[theorem]{Example}
\newtheorem{remark}[theorem]{Remark}
\title[Dilogarithm identities]
{Dilogarithm identities
 for\\ conformal field theories
and cluster algebras:\\ simply laced case}
\author[T.\ Nakanishi]{Tomoki Nakanishi}
\address{ Tomoki Nakanishi:
 Graduate School of Mathematics, Nagoya University,
Nagoya, 464-8604, Japan}
\begin{document}

\begin{abstract}
The dilogarithm identities
for the central charges of conformal
field theories of  simply laced type were conjectured
by Bazhanov, Kirillov, and Reshetikhin.
Their functional generalizations were conjectured by Gliozzi
and Tateo.
They have been partly proved by various authors.
We prove these identities in full generality
for any pair of  Dynkin diagrams of simply laced type
based on the cluster algebra formulation
of the Y-systems.
\end{abstract}

\maketitle

\section{Introduction}

\subsection{Dilogarithm identities}

Let $L(x)$ be the {\em Rogers dilogarithm function\/}
\cite{L,Ki2,Zag2,N}
\begin{align}
\label{eq:L0}
L(x)=-\frac{1}{2}\int_{0}^x 
\left\{ \frac{\log(1-y)}{y}+
\frac{\log y}{1-y}
\right\} dy
\quad (0\leq x\leq 1).
\end{align}
It is well known that the following properties hold
$(0\leq x,y\leq 1)$.
\begin{gather}
\label{eq:L1}
L(0)=0,
\quad L(1)=\frac{\pi^2}{6},\\
\label{eq:L2}
\quad L(x) + L(1-x)=\frac{\pi^2}{6},\\
\label{eq:L3}
\quad L(x) + L(y)+ L(1-xy)+
L\left( \frac{1-x}{1-xy}\right)
+L\left( \frac{1-y}{1-xy}\right)
=\frac{\pi^2}{2}.
\end{gather}

In the series of works by
Bazhanov, Kirillov, and Reshetikhin
\cite{KR1,BR1,KR2,Ki1,BR2} they reached a remarkable
conjecture on identities expressing the central charges
of conformal field theories
in terms of $L(x)$,
and partly established it.
Let us concentrate on the identities
in the {\em simply laced case} here.

Let $X_r$ be any simply laced
Dynkin diagram of finite type with rank $r$,
and $I$ be the index set of $X_r$.
Let $\ell\geq 2$ be any integer.
For a family of positive real numbers
$\{Y^{(a)}_m \mid a\in I; 1\leq m \leq \ell-1\}$,
consider a system of algebraic relations
\begin{align}
\label{eq:y1}
(Y^{(a)}_m)^2 = 
\frac{\displaystyle
\prod_{b:b\sim a} (1+Y^{(b)}_m)}
{(1+Y^{(a)}_{m-1}{}^{-1})(1+Y^{(a)}_{m+1}{}^{-1})},
\end{align}
where $b\sim a$ means $b$ is adjacent to $a$ in $X_r$,
and $Y^{(a)}_{0}{}^{-1}=Y^{(a)}_{\ell}{}^{-1}=0$
if they appear in the right hand sides.

\begin{theorem}[{\cite{NK,Zag2}}]
There exists a
unique positive real solution
of  \eqref{eq:y1}.
\end{theorem}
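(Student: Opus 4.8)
The plan is to turn the multiplicative system \eqref{eq:y1} into a single strictly convex variational problem. First I would substitute $w^{(a)}_m=(1+Y^{(a)}_m)^{-1}$, with the boundary convention $w^{(a)}_0=w^{(a)}_\ell=0$, so that a positive real solution of \eqref{eq:y1} is exactly a point $\mathbf p=(p^{(a)}_m)$, $p^{(a)}_m:=\log w^{(a)}_m$, of the open convex set $D=(-\infty,0)^{I\times\{1,\dots,\ell-1\}}$ at which the logarithm of \eqref{eq:y1} holds (taking logarithms is reversible since both sides of \eqref{eq:y1} are positive). A short computation rewrites that logarithmic system as
\begin{equation*}
\sum_{n} C'_{mn}\log\bigl(1-w^{(a)}_n\bigr)=\sum_{b} C_{ab}\log w^{(b)}_m,
\end{equation*}
where $C$ is the Cartan matrix of $X_r$ and $C'$ that of $A_{\ell-1}$ (the boundary terms drop out because $\log(1-w^{(a)}_0)=\log(1-w^{(a)}_\ell)=0$); applying $C'^{-1}$ in the index $m$ to eliminate the $\log(1-w)$'s turns it into
\begin{equation*}
\log\bigl(1-e^{p^{(a)}_m}\bigr)=(\widetilde M\mathbf p)^{(a)}_m,\qquad \widetilde M:=C\otimes C'^{-1}.
\end{equation*}
Here $\widetilde M$ is symmetric and positive definite, since $C$ and $C'$ are Cartan matrices of finite type and hence symmetric positive definite; this is the only place the finite type of $X_r$ enters.

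The key observation is that this is the critical-point equation on $D$ of
\begin{equation*}
F(\mathbf p)=-\sum_{a,m}\mathrm{Li}_2\bigl(e^{p^{(a)}_m}\bigr)-\tfrac12\langle\mathbf p,\widetilde M\mathbf p\rangle,
\end{equation*}
because $\partial_{p^{(a)}_m}\bigl(-\mathrm{Li}_2(e^{p})\bigr)=\log(1-e^{p})$ — so the solutions of \eqref{eq:y1} are exactly the critical points of a dilogarithm-plus-quadratic functional, very much in the spirit of the identities under study. Its Hessian is $\operatorname{diag}\bigl(-e^{p^{(a)}_m}/(1-e^{p^{(a)}_m})\bigr)-\widetilde M$, a sum of a negative definite diagonal matrix and $-\widetilde M\prec 0$, hence negative definite throughout $D$; thus $F$ is strictly concave, and a strictly concave function has at most one critical point. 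This already gives uniqueness, with no a priori estimate on $Y$ needed.

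For existence I would show that $F$ attains its supremum in the interior of $D$. Since $0\le\mathrm{Li}_2(e^{p})\le\pi^2/6$ for $p\le 0$, we get $F(\mathbf p)\le-\tfrac12\lambda_{\min}(\widetilde M)\,|\mathbf p|^2$, so $F\to-\infty$ as $|\mathbf p|\to\infty$ in $D$; hence a maximizing sequence is bounded and subconverges to some $\mathbf p^\ast$ with every $p^{\ast(a)}_m\le 0$. It then remains to rule out $\mathbf p^\ast$ lying on the boundary face where some $p^{(a)}_m=0$: there $\partial_{p^{(a)}_m}F=\log(1-e^{p^{(a)}_m})-(\widetilde M\mathbf p)^{(a)}_m\to-\infty$ as $p^{(a)}_m\to 0^-$ with the remaining coordinates bounded, so nudging the offending coordinates into $D$ strictly increases $F$ — a contradiction. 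Therefore the maximizer lies in $D$, is the unique critical point by strict concavity, and, undoing the substitution, is the unique positive solution of \eqref{eq:y1}.

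I expect the main obstacle to be precisely this last boundary analysis. Coercivity at infinity and strict concavity come out cleanly from positive-definiteness of $\widetilde M$, but a priori the supremum of $F$ could escape to $\partial D$, i.e.\ some $Y^{(a)}_m$ could degenerate to $0$; what prevents this is the logarithmic blow-up of the dilogarithm's derivative at the boundary, and one has to make the "nudge" argument precise in the case where several coordinates of $\mathbf p^\ast$ vanish at once.
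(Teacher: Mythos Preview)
The paper does not actually prove this theorem; it simply quotes it from \cite{NK,Zag2} and moves on, so there is no ``paper's own proof'' to compare with. Your variational argument is correct and is, in fact, very close to the approach of the cited references (Nahm--Keegan and Zagier): rewrite the constant Y-system as the critical-point equation of a strictly concave potential built from $\mathrm{Li}_2$ and a positive-definite quadratic form, then use coercivity plus the logarithmic boundary singularity of $\nabla F$ to force the maximizer into the interior. The substitution $w=(1+Y)^{-1}$, the reduction to $\log(1-e^{p})=(C\otimes C'^{-1})\mathbf p$, and the identification of $C\otimes C'^{-1}$ as symmetric positive definite (both Cartan matrices being of finite type) are all sound.

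One small point worth tightening, which you yourself flag: to exclude maximizers on $\partial D$ cleanly, note that $F$ extends \emph{continuously} to the closed box $(-\infty,0]^{|I|(\ell-1)}$ (since $\mathrm{Li}_2$ is continuous on $[0,1]$), so by coercivity the supremum is attained at some $\mathbf p^\ast$ in the closure; if a nonempty set $B$ of coordinates vanishes, perturb along $-\sum_{(a,m)\in B}e_{(a,m)}$ and observe that the one-sided directional derivative is $+\infty$ because each $\log(1-e^{p^{(a)}_m})\to-\infty$ while $(\widetilde M\mathbf p^\ast)^{(a)}_m$ stays finite. This handles the simultaneous-degeneration case you were worried about in one stroke.
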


\begin{conjecture}[Dilogarithm identities \cite{Ki1,BR2}]
\label{conj:DI}
Suppose that a family of positive real numbers
$\{Y^{(a)}_m \mid a\in I; 1\leq m \leq \ell-1\}$
satisfies \eqref{eq:y1}.
Then, we have the identities
\begin{align}
\label{eq:DI}
\frac{6}{\pi^2}\sum_{a\in I}
\sum_{m=1}^{\ell-1}
L\left(\frac{Y^{(a)}_m}{1+Y^{(a)}_m}\right)
=
\frac{\ell \dim \mathfrak{g}}{h+\ell} - r,
\end{align}
where $h$ and
$\mathfrak{g}$ are  the Coxeter number
and  the simple Lie algebra of type $X_r$, respectively.
\end{conjecture}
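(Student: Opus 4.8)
The plan is to deduce Conjecture~\ref{conj:DI} — and, more generally, the functional identities conjectured by Gliozzi and Tateo, of which \eqref{eq:DI} is the specialization to the constant solution of Theorem~1.1 — from the cluster-algebraic incarnation of the $Y$-system \eqref{eq:y1}. Since $X_r$ is a tree it has a bipartite vertex partition $I=I_{+}\sqcup I_{-}$; combined with the bipartite structure of the path $A_{\ell-1}$ this bicolours the vertex set $\mathbf I=I\times\{1,\dots,\ell-1\}$ of the product quiver $Q=X_r\otimes A_{\ell-1}$, which we orient accordingly. In the $Y$-pattern (coefficient dynamics) of the cluster algebra with exchange quiver $Q$, write $\mu_{\pm}$ for the composite of the mutations at all vertices of one colour (these commute among themselves). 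As in the cluster formulation of $T$- and $Y$-systems, the alternating composite $\cdots\mu_{-}\mu_{+}\mu_{-}\mu_{+}$ transports a $Y$-seed along the $u$-dependent level-$\ell$ $Y$-system of type $X_r$, whose $u$-independent reduction is exactly \eqref{eq:y1}. By the periodicity theorem for $Y$-systems of type $X_r\otimes A_{\ell-1}$ (Zamolodchikov's conjecture in the simply laced case), this mutation sequence is periodic up to a Dynkin automorphism $\nu$, with period length controlled by $h+\ell$; fix one period $\mathbf y(0)\to\cdots\to\mathbf y(N)=\nu(\mathbf y(0))$, written as single-vertex mutations $\mathbf y(w+1)=\mu_{k_w}(\mathbf y(w))$, and write $y_{k_w}(w)$ for the mutated coefficient at step $w$.

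\textbf{The dilogarithm identity attached to a mutation loop.} By sign-coherence of $c$-vectors, at each step $w$ the vector $c_{k_w}(w)$ is either non-negative or non-positive; let $\varepsilon_w\in\{+1,-1\}$ be its sign. Assembling the five-term relation \eqref{eq:L3} along the successive mutations and closing up with the periodicity gives
\begin{equation*}
\sum_{w=0}^{N-1}\varepsilon_w\,L\!\left(\frac{y_{k_w}(w)^{\varepsilon_w}}{1+y_{k_w}(w)^{\varepsilon_w}}\right)=0 ,
\end{equation*}
which, after rewriting the terms with $\varepsilon_w=-1$ by means of \eqref{eq:L2}, is equivalent to the positive form
\begin{equation*}
\frac{6}{\pi^2}\sum_{w=0}^{N-1}L\!\left(\frac{y_{k_w}(w)}{1+y_{k_w}(w)}\right)=\#\{\,w:\varepsilon_w=-1\,\},
\end{equation*}
whose right-hand side is a purely combinatorial invariant of the tropical $Y$-system (the $c$-vector dynamics) along the loop.

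\textbf{From the loop sum to \eqref{eq:DI}.} Using the dictionary between cluster $y$-variables along the loop and the $Y$-system variables $Y^{(a)}_m(u)$, together with \eqref{eq:L1}--\eqref{eq:L2} to dispose of the terms that tropically degenerate to $L(0)=0$ or $L(1)=\tfrac{\pi^2}{6}$, the displays above collapse to an identity of the shape
\begin{equation*}
\frac{6}{\pi^2}\sum_{a\in I}\sum_{m=1}^{\ell-1}L\!\left(\frac{Y^{(a)}_m}{1+Y^{(a)}_m}\right)=\frac{M}{h+\ell},
\end{equation*}
where $M$ is a nonnegative integer read off from the $c$-vectors of the loop; running the same bookkeeping with generic initial $y$-data in place of the constant solution yields the full Gliozzi--Tateo functional identity, of which \eqref{eq:DI} is the constant case.

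\textbf{Evaluation of $M$: the main obstacle.} What remains is to compute $M$ and show $M=\ell\dim\mathfrak g-r(h+\ell)=rh(\ell-1)$, so that the right-hand side above becomes $\dfrac{\ell\dim\mathfrak g}{h+\ell}-r$. This is the technical heart of the proof. One must describe explicitly the tropical $Y$-system of type $X_r\otimes A_{\ell-1}$ — the orbit of $c$-vectors over one period — and carry out the count: the denominator $h+\ell$ is the periodicity of that tropical system, while $\dim\mathfrak g$ and $h$ should enter through an identification of the $c$-vectors with roots of $\mathfrak g$ permuted by a Coxeter-type element, turning $M$ into a weighted enumeration of positive roots over one orbit. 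Establishing this — equivalently, solving the tropical $Y$-system of type $X_r\otimes A_{\ell-1}$ and summing — is where essentially all the difficulty lies; everything else is formal once the preceding steps are in place.
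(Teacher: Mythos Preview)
Your outline matches the paper's architecture: recast everything in the cluster algebra for $B(X_r,A_{\ell-1})$, obtain a functional dilogarithm identity over one period whose right-hand side is a combinatorial integer, compute that integer from the tropical $Y$-system, and then specialize to the constant solution. You also correctly locate the hard step in the tropical count and arrive at the right value $rh(\ell-1)$.

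Where your proposal diverges from the paper is in the \emph{mechanism} for the functional identity. You assert that ``assembling the five-term relation \eqref{eq:L3} along the successive mutations'' yields $\sum_w \varepsilon_w\, L(\cdots)=0$. That identity is correct, but it is not obtained by telescoping Abel's relation; the paper proves it (equivalently, Proposition~\ref{prop:wedge}) by the Frenkel--Szenes constancy argument: one expands $y_{\mathbf i}(u)$ and $1+y_{\mathbf i}(u)$ through $F$-polynomials via \eqref{eq:F2}--\eqref{eq:F3} and checks that $\sum y_{\mathbf i}(u)\wedge(1+y_{\mathbf i}(u))=0$ in $\bigwedge^2\mathbb Q_{\mathrm{sf}}(y)$ term by term. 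Likewise, sign-coherence of $c$-vectors, which you invoke as a black box, is established directly in the paper (Proposition~\ref{prop:monom}) through a \emph{factorization property} of the tropical $Y$-system: for $0\le u\le h'-1$ the columns evolve as $r$ independent copies of the tropical system for $X'_{r'}$, and for $-h\le u\le -1$ the rows evolve as $r'$ independent copies for $X_r$; this reduces both sign-coherence and the count $N_-=hrr'$ to the rank-one case of \cite{FZ4}. Finally, your passage about terms that ``tropically degenerate to $L(0)$ or $L(1)$'' conflates two separate steps: the $0/\infty$ limit is used only to \emph{evaluate} the constant once constancy is known, whereas the passage from the functional identity to \eqref{eq:DI} is simply specialization to the constant solution of $\mathbb Y(X_r,A_{\ell-1})$ --- nothing degenerates there.
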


\begin{remark}
The conjectures in \cite{Ki1} and \cite{BR2} are not exactly the same.
This is the version in \cite[Eqs.~(4.21), (4.22), (5.3)]{BR2}
with the identification of $f^a_j$ therein with
$Y^{(a)}_j/(1+Y^{(a)}_j)$ here.
The version in \cite{Ki1} also concerns the 
construction of the solution of \eqref{eq:y1}.
We do not touch this issue here, since it is regarded as
an independent problem in the framework of the present paper.
\end{remark}

For $X_r=A_r$, Kirillov \cite{Ki1} gave
the explicit expression of the solution
of \eqref{eq:y1}, and  proved the corresponding
identity \eqref{eq:DI} by the analytic method.

Due to the well-known
formula $\dim\mathfrak{g}=r(h+1)$, the right hand side of 
\eqref{eq:DI} is
 equal to the number
\begin{align}
\label{eq:c1}
\frac{ (\ell-1)rh}{h+\ell}.
\end{align}
It is already remarkable that the left hand side
of \eqref{eq:DI} is a rational number.
It is much more remarkable that
the rational number of the first term in the right hand side
of \eqref{eq:DI} is the central charge of the {\em Wess-Zumino-Witten
 conformal
field theory\/} \cite{KZ,GW} of type $X_r$ with level $\ell$.
(This is one of the reason
why the integer $\ell$ is called the {\em level}.)
The rational number in the right hand side of \eqref{eq:DI}
 itself
is also the central charge of the {\em parafermion conformal
field theory\/} of type $X_r$ with level $\ell$ \cite{FZ,G}.
The identity \eqref{eq:DI} is crucial  to establish the connection between
conformal field theories and various types of
nonconformal integrable models in various limits.

\begin{example}[\cite{KR1}]
 Consider the case $X_r=A_1$ and any $\ell$,
which is equivalent to the case
$X_r=A_{\ell-1}$ and $\ell=2$ by the {\em level-rank duality}.
Then, one has a solution
of \eqref{eq:y1}
\begin{align}
Y^{(1)}_m = \frac{\sin^2\frac{\pi}{\ell+2}}
    {\sin\frac{m\pi}{\ell+2}
\sin\frac{(m+2)\pi}{\ell+2}},
\end{align}
and the corresponding identity  \eqref{eq:DI} reads 
\begin{align}
\label{eq:DIex}
\frac{6}{\pi^2}
\sum_{m=1}^{\ell-1}
L\left(
\frac{
\sin^2 \frac{\pi}{\ell+2}
}
{
\sin^2\frac{(m+1)\pi}{\ell+2}
}
\right)
=
\frac{3\ell}{2+\ell} - 1.
\end{align}
This identity has been known and studied
by various authors in various points of view.
To name few, Lewin \cite{L}, Richmond-Szekeres \cite{RS},
Kirillov-Reshetikhin \cite{KR1},
Nahm-Recknagel-Terhoeven \cite{NRT},
Dupont-Sah \cite{DS}, {\em etc}.
\end{example}

\subsection{Functional dilogarithm identities}

The system \eqref{eq:y1} admits
an affinization called the {\em Y-system} introduced
by Zamolodchikov \cite{Zam}, Kuniba-Nakanishi \cite{KN},
and Ravanini-Tateo-Valleriani \cite{RTV}.
Here, we consider the version in \cite{RTV}.
Let $X_r$ and $X'_{r'}$ be a pair of simply laced
Dynkin diagrams of finite type.
%Let $C=(C_{ij})_{i,j \in I}$ and
%$C'=(C'_{i'j'})_{i',j' \in I'}$ be the
%Cartan matrices of types $X_r$ and $X'_{r'}$, respectively.
Let $I$ and $I'$ be the
index sets of $X_r$ and $X'_{r'}$, respectively.
For a family of variables
 $\{Y_{ii'} (u)\mid i\in I, i'\in I',
u\in \mathbb{Z}\}$,
the {\em Y-system $\mathbb{Y}(X_r,X'_{r'})$ associated with
a pair $(X_r,X'_{r'})$}
is a system of the algebraic/functional relations
\begin{align}
\label{eq:Y2}
Y_{ii'}(u-1)Y_{ii'}(u+1)= 
\frac{\displaystyle
\prod_{j:j\sim i} (1+Y_{ji'}(u))}
{\displaystyle
\prod_{j':j'\sim i'} (1+Y_{ij'}(u)^{-1})},
\end{align}
where $j\sim i$ means $j$ is adjacent to $i$ in $X_r$,
while $j'\sim i'$ means $j'$ is adjacent to $i'$ in 
$X'_{r'}$.
Two systems $\mathbb{Y}(X_r,X'_{r'})$
and $\mathbb{Y}(X'_{r'},X_{r})$ are equivalent to
each other by the correspondence
$Y_{ii'}(u)\leftrightarrow Y_{i'i}(u)^{-1}$.
This is a generalization of the level-rank duality. 

Ravanini-Tateo-Valleriani \cite{RTV} gave
the periodicity conjecture, which  generalized the one
by \cite{Zam} in the case $X_r=A_1$ or $X'_{r'}=A_1$.

\begin{conjecture}[Periodicity \cite{RTV}]
\label{conj:period1}
Suppose that 
a family of positive real numbers
 $\{Y_{ii'} (u)\mid i\in I, i'\in I',
u\in \mathbb{Z}\}$
satisfies $\mathbb{Y}(X_r,X'_{r'})$.
Then, we have the periodicity
\begin{align}
Y_{ii'}(u+2(h+h'))=Y_{ii'}(u),
\end{align}
where $h$ and $h'$ are the Coxeter numbers of types
$X_r$ and $X'_{r'}$, respectively.
\end{conjecture}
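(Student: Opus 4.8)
The plan is to establish the periodicity by recasting the Y-system $\mathbb{Y}(X_r,X'_{r'})$ as a sequence of cluster mutations in an appropriate cluster algebra, and then deduce the periodicity of $Y$-variables from the periodicity of the corresponding sequence of seeds. Concretely, I would first form the skew-symmetric integer matrix $B$ on the index set $I\times I'\times\{\text{alternating parity of }u\}$ whose entries encode the adjacency data of $X_r$ and $X'_{r'}$ with the signs dictated by the numerator/denominator structure of \eqref{eq:Y2}; this is essentially the tensor product of the (signed) Cartan-type incidence matrices. One then checks that the rational map sending a family $\{Y_{ii'}(u)\}$ satisfying \eqref{eq:Y2} to $\{Y_{ii'}(u+1)\}$ coincides, on the level of $Y$-variables (coefficient tuples) in the sense of Fock–Goncharov and Fomin–Zelevinsky, with a composite mutation $\mu$ obtained by mutating at all vertices of one parity class. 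The heart of the argument is then the identity $\mu^{h+h'} = \sigma$ for a finite-order permutation $\sigma$ of the vertex set (a combination of the longest-element/Coxeter-type involutions on the two factors), so that $\mu^{2(h+h')} = \mathrm{id}$ on seeds, which forces $Y_{ii'}(u+2(h+h'))=Y_{ii'}(u)$.

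The key steps, in order, are: (1) set up the exchange matrix $B=B(X_r,X'_{r'})$ and verify that mutating at one parity class sends $B$ to $-B$ (equivalently, to the other parity labelling), so that the composite of two such mutations is a well-defined seed iteration whose $Y$-dynamics reproduces \eqref{eq:Y2}; (2) identify the relevant cluster algebra with one of \emph{finite type} — here I would invoke the classification of cluster algebras of finite type and the fact that $B(X_r,X'_{r'})$ is mutation-equivalent to a Cartan matrix of type governed by the pair, so that there are only finitely many seeds and the mutation sequence is eventually periodic; (3) pin down the exact period by tracking the piecewise-linear tropical ($c$-vector / $g$-vector) dynamics, where the mutation sequence acts as a product of two reflections and hence has order related to a Coxeter number, giving period $h+h'$ up to the sign twist $B\mapsto -B$ and therefore $2(h+h')$ for the $Y$-variables themselves; (4) transfer the periodicity from the tropical/combinatorial level back to the actual positive-real solutions via the separation-of-additions formula expressing $Y$-variables in terms of $F$-polynomials and $c$-vectors, which shows that periodicity of the combinatorial data implies periodicity of $\{Y_{ii'}(u)\}$.

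The main obstacle I anticipate is step (3): going from "the mutation sequence is periodic" (which follows softly from finite type) to the \emph{sharp} period $2(h+h')$ predicted in Conjecture \ref{conj:period1}. Finite type alone only gives \emph{some} period; extracting exactly $2(h+h')$ requires a careful analysis of how the composite mutation acts on the $g$-vector fan, and in particular recognizing the two "half-turn" permutations (one for each Dynkin factor) whose orders are $2h$ and $2h'$ but which combine, through the tensor structure of $B$, to produce a single iteration of order $2(h+h')$ rather than $2\,\mathrm{lcm}(h,h')$ or the like. This is genuinely a statement about the interaction of the two root systems, and I expect it to occupy the technical core of the proof; the case $X'_{r'}=A_1$ (original Zamolodchikov periodicity) and the case $X_r=X'_{r'}=A_r$ serve as guiding examples, and the general pattern should be that the relevant permutation is built from the Coxeter element / longest element on the product, whose order is dictated by $h+h'$.

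Once periodicity is in hand, I would then use it — together with the constancy-in-$u$ limit recovering \eqref{eq:y1} — to attack Conjecture \ref{conj:DI}: the dilogarithm sum telescopes along the periodic mutation sequence, and the constant term is evaluated by the Rogers five-term relation \eqref{eq:L3} applied mutation-by-mutation, yielding the central charge $\ell\dim\mathfrak{g}/(h+\ell)-r$ after summing the contributions over one full period. But that is the subject of the later sections; for the present statement, the deliverable is the cluster-algebraic proof of $Y_{ii'}(u+2(h+h'))=Y_{ii'}(u)$ as outlined above.
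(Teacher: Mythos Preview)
Your overall architecture --- recast $\mathbb{Y}(X_r,X'_{r'})$ as a bipartite mutation sequence for the exchange matrix $B(X_r,X'_{r'})$, then argue periodicity of that sequence --- matches how the paper sets things up in Section~2, and steps~(1) and~(4) are exactly right. But step~(2) contains a genuine error that would stop the proof cold: the cluster algebra $\mathcal{A}(B(X_r,X'_{r'}))$ is \emph{not} of finite type in general. Finite type holds only in the degenerate case where one factor is $A_1$ (then $B$ is mutation-equivalent to the Dynkin matrix of the other factor); already for $(A_3,A_2)$ or $(D_4,A_2)$ the square product quiver has infinitely many seeds. So you cannot invoke the Fomin--Zelevinsky classification to get any periodicity at all, let alone the sharp one. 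What is being claimed is much subtler: periodicity along a \emph{particular} infinite mutation path inside an infinite exchange graph.

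The paper does not prove Conjecture~\ref{conj:period1} itself; it imports the result from Keller \cite{Kel1,Kel2} (see Theorem~\ref{thm:period2} and item~(vi) in Section~1.3). Keller's argument does not go through finite type or a direct Coxeter-combinatorics computation on $g$-vectors as you sketch in step~(3); instead it uses the additive categorification of the cluster algebra by a $2$-Calabi--Yau triangulated category (a generalized cluster category built from the tensor product of the two Dynkin quivers), where the composite mutation $\mu_+\mu_-$ is realized by an autoequivalence --- essentially a Zamolodchikov/Auslander--Reiten translation --- whose order can be read off from the Calabi--Yau dimension and the Coxeter numbers of the factors. Periodicity of seeds then follows from periodicity of that functor. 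Your steps~(3)--(4) gesture toward the right invariants, but without the categorical input there is no mechanism that forces the tropical dynamics to close up after $2(h+h')$ steps; the ``product of two reflections has order related to a Coxeter number'' heuristic does not survive the passage to the tensor-product quiver.
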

Conjecture \ref{conj:period1} was proved
for $(X_r,X'_{r'})=(A_r,A_1)$
by
Gliozzi-Tateo \cite{GT2} and
Frenkel-Szenes \cite{FS},
for $(X_r,X'_{r'})=(\mbox{any},A_1)$
by Fomin-Zelevinsky \cite{FZ3},
and for $(X_r,X'_{r'})=(A_r,A_{r'})$
by Volkov \cite{V} and Szenes \cite{S}.
More recently, it was proved 
in full generality by Keller \cite{Kel1,Kel2}.

Furthermore, Gliozzi-Tateo \cite{GT1} significantly
generalized Conjecture \ref{conj:DI} as follows:

\begin{conjecture}[Functional dilogarithm identities \cite{GT1}]
\label{conj:DI2}
Suppose that 
a family of positive real numbers
 $\{Y_{aa'} (u)\mid a\in I, a'\in I',
u\in \mathbb{Z}\}$
satisfies $\mathbb{Y}(X_r,X'_{r'})$.
Then, we have the identities
\begin{align}\label{eq:DI2}
\frac{6}{\pi^2}\sum_{(i,i')\in I\times I'}
\sum_{u=0}^{2(h+h')-1}
L\left(
\frac{Y_{ii'}(u)}{1+Y_{ii'}(u)}
\right)
&=
2h r r',\\
\label{eq:DI3}
\frac{6}{\pi^2}\sum_{(i,i')\in I\times I'}
\sum_{u=0}^{2(h+h')-1}
L\left(
\frac{1}{1+Y_{ii'}(u)}
\right)
&=
2h' r r'.
\end{align}
\end{conjecture}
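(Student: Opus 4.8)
The plan is to reduce the functional identities \eqref{eq:DI2}--\eqref{eq:DI3} to the cluster-algebraic structure underlying the Y-system $\mathbb{Y}(X_r,X'_{r'})$. By Keller's theorem the Y-system has period $2(h+h')$, so the left-hand sides are finite sums over one full period of a trajectory in the cluster $Y$-variables. The first step is to realize this trajectory as a sequence of mutations in the cluster algebra attached to the square product $X_r\square X'_{r'}$ (equivalently, to a suitable bipartite quiver), so that each evolution step $Y_{ii'}(u-1)Y_{ii'}(u+1)=\cdots$ of \eqref{eq:Y2} is a composition of $Y$-seed mutations, and the periodicity becomes the statement that after $2(h+h')$ "half-periods'' (or the appropriate number of mutation steps) one returns to the initial $Y$-seed. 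This is precisely the setting in which Rogers-dilogarithm sums behave well.

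The second, and central, step is to invoke the behavior of the Rogers dilogarithm under a single $Y$-seed mutation. For a mutation at vertex $k$ one has a five-term-type relation: the change in $\sum_j \varepsilon_j L\!\left(\tfrac{Y_j}{1+Y_j}\right)$ (with signs $\varepsilon_j$ determined by the tropical/$c$-vector sign of $Y_j$) across the mutation is controlled entirely by the one variable $Y_k$ being mutated, via \eqref{eq:L2} and \eqref{eq:L3}. Summing these local contributions along the whole periodic mutation sequence, the "boundary'' terms telescope because the seed returns to itself, and what survives is a sum of elementary $L(1)=\pi^2/6$ contributions, one for each mutation step, weighted by the sign pattern of the $c$-vectors (equivalently, by whether the mutated $Y$-variable is tropically positive or negative). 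Thus $\tfrac{6}{\pi^2}\sum L\!\left(\tfrac{Y_{ii'}(u)}{1+Y_{ii'}(u)}\right)$ reduces to counting, with multiplicity, the positive $c$-vectors appearing along one period. The companion identity \eqref{eq:DI3} follows in the same way after applying \eqref{eq:L2} termwise, or equivalently by using the level-rank duality $Y_{ii'}(u)\leftrightarrow Y_{i'i}(u)^{-1}$, which exchanges the two right-hand sides $2hrr'$ and $2h'rr'$.

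The last step is the combinatorial bookkeeping: one must check that the number of mutation steps in one period, together with the sign/$c$-vector data, yields exactly $2hrr'$ (resp.\ $2h'rr'$). Here the relevant input is the structure of the $c$-vectors for the periodic sequence — by Keller's work these are governed by the root system of $X_r$ (resp.\ $X'_{r'}$), and the count of positive roots encountered in one Coxeter-type period is what produces the factor $h r$ (resp.\ $h' r'$), with the remaining factor $2r'$ (resp.\ $2r$) coming from the second Dynkin diagram and the doubling in the period $2(h+h')$. The main obstacle I expect is precisely this last accounting: making the "sign of $Y_{ii'}(u)$'' data along the trajectory completely explicit and matching it to positive roots, and handling the interaction between the two factors $X_r$ and $X'_{r'}$ in the square product so that the bookkeeping is uniform across all simply laced pairs rather than requiring a case-by-case analysis. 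Controlling this is where the cluster-algebra formulation of the $Y$-system — in particular the separation formula expressing $Y$-variables via $F$-polynomials and $c$-vectors — does the essential work.
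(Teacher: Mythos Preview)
Your overall plan matches the paper's: reformulate the Y-system via the cluster algebra of the square product $X_r\,\square\,X'_{r'}$, use Keller's periodicity, show that the dilogarithm sum is independent of the particular positive real solution, and then extract the value by counting tropical signs of the $y$-variables along one period. You are also right that the sign bookkeeping is where the real work lies, and that the separation formula ($F$-polynomials and $c$-vectors) is the essential tool. On that last point the paper's answer is a \emph{factorization property} of the tropical Y-system (Proposition~\ref{prop:monom}): for $0\le u\le h'-1$ the tropical mutations decouple column-by-column into $r$ independent copies of the $(A_1,X'_{r'})$ system, and for $-h\le u\le -1$ they decouple row-by-row into $r'$ copies of the $(X_r,A_1)$ system; both cases then reduce to the finite-type result of \cite{FZ4}, giving $N_-=hrr'$ and $N_+=h'rr'$ uniformly with no case analysis.

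Where your proposal diverges, and where there is a genuine gap, is the mechanism for constancy. The paper does \emph{not} telescope a signed sum $\sum_j \varepsilon_j L\!\big(\tfrac{Y_j}{1+Y_j}\big)$ across individual mutations via \eqref{eq:L2}--\eqref{eq:L3}. That does not work as you state it: when you mutate at $k$, every $Y_j$ with $B_{kj}\ne 0$ changes, and the resulting variations of $L$ at those $j$ are not absorbed by a single five-term relation. Instead the paper, following Frenkel--Szenes, proves the purely algebraic identity
\[
\sum_{(\mathbf{i},u)\in S_+} y_{\mathbf{i}}(u)\wedge(1+y_{\mathbf{i}}(u))=0
\quad\text{in }\textstyle\bigwedge^2\mathbb{Q}_{\mathrm{sf}}(y)
\]
(Proposition~\ref{prop:wedge}); by a general theorem this forces the dilogarithm sum to be constant as a function of the evaluation $\varphi$. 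The wedge identity is checked by substituting the separation formulas \eqref{eq:F2}--\eqref{eq:F3} and cancelling cross-terms using the symmetry of the incidence matrices $M,M'$ and the tropicalized Y-system. Only after constancy is established does one evaluate at the limit $\varphi_t(y_{\mathbf{i}})=t\to 0$, where each $y_{\mathbf{i}}(u)$ tends to $0$ or $\infty$ according to the sign of $[y_{\mathbf{i}}(u)]_{\mathrm{T}}$, and the sum collapses to $N_-$.
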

Two identities
\eqref{eq:DI2} and \eqref{eq:DI3}  are equivalent
to each other due to \eqref{eq:L2}.

Conjecture \ref{conj:DI2}
implies Conjecture \ref{conj:DI};
namely, set $X'_{r'}=A_{\ell-1}$,
and take a  {\em constant solution} $Y_{ii'}=Y_{ii'}(u)$
of $\mathbb{Y}(X_r,A_{\ell-1})$ as a function of $u$.
Then, one obtain  \eqref{eq:DI} from \eqref{eq:DI2}
using  $h'=\ell$, $r'=\ell-1$, and \eqref{eq:c1}.
Conjecture \ref{conj:DI2} was proved
for $(X_r,X'_{r'})=(A_r,A_1)$
by Frenkel-Szenes \cite{FS},
and 
for $(X_r,X'_{r'})=(\mbox{any},A_1)$
by Chapoton \cite{C}.

\begin{example}[\cite{GT1}]
(i) In the simplest case $(X_r,X'_{r'})=(A_1,A_1)$,
the identity \eqref{eq:DI2} is equivalent to
\eqref{eq:L2}.
\par
(ii) In the next simplest case $(X_r,X'_{r'})=(A_2,A_1)$,
the identity \eqref{eq:DI2} is equivalent to
the  5-term relation \eqref{eq:L3}.
\end{example}

Conjecture \ref{conj:DI2} tells us that
what is important in \eqref{eq:DI}
is, not the values of $Y^{(a)}_m$ themselves,
but rather the relations \eqref{eq:Y2}
they satisfy.
We will see below that the algebraic property
of the relations \eqref{eq:Y2} is efficiently
extracted by {\em cluster algebras} introduced 
by Fomin-Zelevinsky \cite{FZ1,FZ2,FZ4}.

\subsection{Main result}
Among several preceding results and methods
concerning Conjectures \ref{conj:period1},
and  \ref{conj:DI2},
we list  the ones
which are particularly
relevant to the present work.

\begin{itemize}
\item[(i)] Frenkel-Szenes \cite{FS} proved Conjecture \ref{conj:DI2}
for $(X_r,X'_{r'})=(A_r,A_1)$
by showing the {\em constancy property} of the
left hand side of \eqref{eq:DI2}.

\item[(ii)] Caracciolo-Gliozzi-Tateo \cite{CGT} studied the constancy property
for any pair $(X_r,X'_{r'})$. (But they did not complete the proof of
Conjecture \ref{conj:DI2}.)

\item[(iii)] Fomin-Zelevinsky \cite{FZ3}
 proved Conjecture \ref{conj:period1}
for $(X_r,X'_{r'})=(\mbox{any},A_1)$
by using the `cluster algebra like' formulation of
Y-systems and the root systems.

\item[(iv)] Chapoton \cite{C} proved Conjecture
\ref{conj:DI2} for $(X_r,X'_{r'})=(\mbox{any},A_1)$
by combining the constancy property of (i) and the result
 of (iii).

\item[(v)] Fomin-Zelevinsky \cite{FZ4} more manifestly
integrated the Y-system
$\mathbb{Y}(X_r,A_1)$ in the framework of
{\em cluster algebras with coefficients},
where the Y-system  is identified with  a system of
relations among  {\em coefficients\/} of the cluster algebras
of type $X_r$.

\item[(vi)] Keller \cite{Kel1,Kel2} proved Conjecture \ref{conj:period1}
in full generality for any pair  $(X_r,X'_{r'})$
of simply laced type  by 
using cluster algebras with coefficients
of (v)  together with their categorifications
 by the {\em cluster categories}.
\end{itemize}

We also remark that
the connection between the dilogarithm
and cluster algebras was  studied earlier by Fock-Goncharov \cite{FG}.

By combining these results, methods, and ideas,
we prove Conjecture \ref{conj:DI2}.

\begin{theorem}
\label{thm:main2}
Conjecture \ref{conj:DI2} is true for any pair
$(X_r,X'_{r'})$ of simply laced type.
\end{theorem}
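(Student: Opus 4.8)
The plan is to reformulate the Y-system $\mathbb{Y}(X_r,X'_{r'})$ inside the theory of cluster algebras with coefficients, following the approach of Fomin–Zelevinsky \cite{FZ4} and its generalization by Keller \cite{Kel1,Kel2}. Concretely, I would attach to the pair $(X_r,X'_{r'})$ a quiver (or skew-symmetric exchange matrix) built from the tensor product of the two Dynkin diagrams, together with a periodic sequence of mutations; the $Y$-variables $Y_{ii'}(u)$ are then identified with the coefficient tuples ($y$-variables) produced along this mutation sequence, and the relations \eqref{eq:Y2} become exactly the coefficient mutation rules. The periodicity Conjecture \ref{conj:period1}, now a theorem of Keller, tells us that this mutation sequence closes up after $2(h+h')$ steps, so the left-hand side of \eqref{eq:DI2} is a finite sum of Rogers dilogarithm values indexed by one full period of the mutation loop.

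With this setup in place, the engine of the proof is the behavior of the Rogers dilogarithm under a single mutation. The key identity is that for a mutation at vertex $k$, the combination $\sum_i L\!\left(\frac{y_i}{1+y_i}\right)$ changes by an explicit quantity governed only by the sign of the exchanged coefficient and by the $c$-vectors (tropical sign data); this is essentially the content of the five-term relation \eqref{eq:L3} applied locally, packaged as in Fock–Goncharov \cite{FG} and in the constancy computations of Caracciolo–Gliozzi–Tateo \cite{CGT}. Summing this local variation around the whole period, the telescoping contributions cancel and what remains is a combinatorial count of sign changes, i.e. of how many times each $c$-vector passes from the positive cone to the negative cone. So the next step is: (i) establish the one-step dilogarithm variation formula rigorously, and (ii) sum it around the period to reduce \eqref{eq:DI2} to a statement about tropical sign patterns of $c$-vectors along the Y-system mutation sequence.

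The final step is to evaluate that combinatorial count. The right-hand side $2hrr'$ should emerge as the number of mutations in one period times the rank, minus the number of sign changes, using the known structure of $c$-vectors for cluster algebras of type $X_r$ (or $X_r \times X'_{r'}$): each $c$-vector starts positive, becomes negative exactly once, and returns. Here I would lean on the root-system description of the periodic mutation sequence — the same structure Fomin–Zelevinsky \cite{FZ3} used to prove periodicity for $(\text{any},A_1)$, now available in full generality — together with the categorification via cluster categories of \cite{Kel1,Kel2} to control the $c$-vectors. Then \eqref{eq:DI3} follows from \eqref{eq:DI2} by \eqref{eq:L2}, since the total number of summands is $2(h+h')rr'$ and $L(x)+L(1-x)=\pi^2/6$.

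The main obstacle I anticipate is step two: controlling the accumulated dilogarithm variation around a full period in a way that is uniform over \emph{all} pairs $(X_r,X'_{r'})$ and does not secretly require knowing the explicit solution. The constancy argument of Frenkel–Szenes and the partial results of Caracciolo–Gliozzi–Tateo show the left-hand side of \eqref{eq:DI2} is a constant of motion under the Y-system evolution, but pinning down its \emph{value} requires either a clean specialization (e.g. the tropical/stammering limit where all $Y$-variables degenerate and $L$ takes only the values $0$ and $\pi^2/6$) or a careful bookkeeping of which $c$-vectors flip sign in one period. Making that specialization legitimate — i.e. showing the constant computed at a degenerate point equals the constant at a generic positive-real solution — is where the cluster-algebraic input (periodicity, separation formulas, positivity of $F$-polynomials) must be marshalled most carefully, and I expect that to be the technical heart of the argument.
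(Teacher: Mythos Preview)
Your proposal is correct in its overall architecture and lands on essentially the same two-step strategy as the paper: first establish that the left-hand side of \eqref{eq:DI2} is constant (independent of the choice of positive real solution), then evaluate it at a degenerate limit where each $y$-variable goes to $0$ or $\infty$, so the answer becomes a count of tropical signs. Your final paragraph in fact describes exactly what the paper does.

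Two points of difference in mechanism are worth noting. For constancy, the paper does not use a one-step mutation variation formula for $L$; instead it proves directly that $\sum_{(\mathbf{i},u)} y_{\mathbf{i}}(u)\wedge(1+y_{\mathbf{i}}(u))=0$ in $\bigwedge^2\mathbb{Q}_{\mathrm{sf}}(y)$ and invokes the general theorem of Frenkel--Szenes that this implies constancy of the dilogarithm sum. The vanishing is obtained by expanding via the separation formula (your ``$F$-polynomials and separation formulas'' are exactly right) and a symmetry/cancellation argument. For the sign count, you propose to control the $c$-vectors via categorification; the paper instead proves a \emph{factorization property} of the tropical Y-system: in the forward time direction the tropical mutations decouple into independent copies of the $X'_{r'}$ (rank-one) system along each column, and in the backward direction into copies of the $X_r$ system along each row. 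This reduces the sign pattern to the known finite-type case of \cite[Prop.~10.7]{FZ4} and yields $N_-=hrr'$, $N_+=h'rr'$ without appeal to the cluster category. Your route via $c$-vectors and categorification would presumably also work, but the paper's factorization argument is more elementary and self-contained.
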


\begin{corollary}
\label{cor:main}
Conjecture \ref{conj:DI} is true for any $X_r$ of simply laced type
and any $\ell\geq 2$.
\end{corollary}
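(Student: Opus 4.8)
The plan is to reformulate the Y-system $\mathbb{Y}(X_r,X'_{r'})$ inside the theory of cluster algebras with coefficients, following the program of Fomin--Zelevinsky \cite{FZ4} and Keller \cite{Kel1,Kel2}. Concretely, to a pair $(X_r,X'_{r'})$ one associates a quiver (or skew-symmetric exchange matrix $B$) built from the product of the two Dynkin diagrams with an alternating orientation, together with a bipartite-type mutation sequence whose iteration advances the variable $u$ by $2$. Under this dictionary the variables $Y_{ii'}(u)$ become the $y$-variables (coefficients), and the relations \eqref{eq:Y2} become exactly the $Y$-seed mutation rule; the periodicity of Conjecture \ref{conj:period1}, already proved in full generality by Keller, says that after $2(h+h')$ steps in $u$ the whole $Y$-seed (coefficients together with the exchange matrix) returns to its starting point. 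This is the structural input that makes the sum on the left-hand side of \eqref{eq:DI2} a sum around a closed loop in the $Y$-pattern.

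Next I would invoke the general identity for the Rogers dilogarithm along a sequence of $Y$-seed mutations. For a single mutation $\mu_k$, the five-term relation \eqref{eq:L3} yields a local identity relating $L(y_j/(1+y_j))$ before and after the mutation; summing these telescoping local identities along any mutation sequence that returns the $Y$-seed to itself produces a constant (independent of the initial positive real values), and that constant is computed purely combinatorially from the $c$-vectors (tropical sign data) and the exchange matrices encountered along the loop. This is precisely the ``constancy property'' isolated by Frenkel--Szenes \cite{FS} and studied for general pairs by Caracciolo--Gliozzi--Tateo \cite{CGT}: once constancy is known, the left-hand side of \eqref{eq:DI2} may be evaluated at any convenient point. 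I would therefore first establish constancy via the mutation/five-term argument, and then evaluate the constant — either at the constant solution of $\mathbb{Y}(X_r,A_{\ell-1})$ reduced to the algebraic $Y$-system \eqref{eq:y1} (whose value is known from Kirillov's $A_r$ computation and the reductions indicated in the excerpt), or directly from the tropical $c$-vector bookkeeping along the bipartite loop, getting $2hrr'$ for \eqref{eq:DI2} and $2h'rr'$ for \eqref{eq:DI3} by the symmetry $Y_{ii'}\leftrightarrow Y_{i'i}^{-1}$.

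The main obstacle I anticipate is making the constancy argument genuinely rigorous at the level of the dilogarithm sum rather than just its formal derivative. Frenkel--Szenes and Chapoton handled the cases with $X'_{r'}=A_1$, where the combinatorics of the associated cluster algebra of type $X_r$ is well controlled; for a general pair one must (a) identify the correct mutation sequence realizing the $u\mapsto u+2$ shift and check that it closes up after the full period without the exchange matrix drifting, (b) track the $c$-vectors / tropical signs carefully so that the local five-term contributions assemble into a telescoping sum with no leftover boundary terms, and (c) ensure positivity is preserved throughout so that all arguments of $L$ stay in $[0,1]$ and the functional equations \eqref{eq:L1}--\eqref{eq:L3} apply. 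Keller's periodicity theorem \cite{Kel1,Kel2} removes much of the difficulty in (a), and the sign-coherence of $c$-vectors (known for acyclic / quiver cluster algebras) supplies (b); the remaining work is bookkeeping. Finally, Corollary \ref{cor:main} follows immediately: specialize $X'_{r'}=A_{\ell-1}$, take a constant solution in $u$, reduce \eqref{eq:DI2} using $h'=\ell$, $r'=\ell-1$ and \eqref{eq:c1} to recover \eqref{eq:DI}, with Theorem~1.1 guaranteeing that a (unique) positive solution of \eqref{eq:y1} exists.
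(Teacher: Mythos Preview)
Your overall plan---cluster-algebraic reformulation, Keller's periodicity, a constancy argument, evaluation at a tropical/limit point, then the specialization $X'_{r'}=A_{\ell-1}$---is exactly the paper's route. Two points, however, are not right as stated.

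First, a single mutation $\mu_k$ does \emph{not} yield a five-term dilogarithm identity that telescopes; the five-term relation is the full period in the rank-two $A_2$ case, not a one-step local law. The paper proves constancy instead by showing that $\sum_{(\mathbf{i},u)\in S_+} y_{\mathbf{i}}(u)\wedge(1+y_{\mathbf{i}}(u))=0$ in $\bigwedge^2\mathbb{Q}_{\mathrm{sf}}(y)$ (the Frenkel--Szenes criterion). The mechanism is to write $y_{\mathbf{i}}(u)$ and $1+y_{\mathbf{i}}(u)$ as a tropical monomial times a ratio of $F$-polynomials (Lemma~\ref{lem:F}, \eqref{eq:F2}--\eqref{eq:F3}); the pure-tropical piece vanishes by sign-coherence (your $c$-vector observation, which is Proposition~\ref{prop:monom}(i)), the pure-$F$ pieces cancel by the symmetry of the incidence matrices, and the cross terms cancel by the Y-system relations themselves. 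So the ingredients you list are correct, but the way they combine is through $\bigwedge^2$ and $F$-polynomials, not through iterated five-term relations.

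Second, evaluating the constant at the constant-in-$u$ solution of $\mathbb{Y}(X_r,A_{\ell-1})$ is circular: that value is precisely \eqref{eq:DI}, the statement of the corollary. Only your other option---the tropical ($0/\infty$) limit---works. There the value equals the number $N_-$ of $(\mathbf{i},u)\in S_+$ with $[y_{\mathbf{i}}(u)]_{\mathrm{T}}$ a negative monomial, and the paper shows $N_-=hrr'$ by a genuine structural observation: in the tropical evaluation the mutations \emph{factorize}, with no interaction in the horizontal (resp.\ vertical) direction for $0\le u\le h'-1$ (resp.\ $-h\le u\le -1$), reducing the count to $r'$ (resp.\ $r$) independent copies of the Fomin--Zelevinsky finite-type case \cite{FZ4}. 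This factorization is the key new idea and is more than ``bookkeeping.''
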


\begin{remark}
The dilogarithm identities of  {\em nonsimply laced\/} type
by Kirillov \cite[Eq.~(7)]{Ki1}, properly corrected by
Kuniba \cite[Eqs.~(A.1a), (A.1c)]{Ku},
 are equally important.
We stress that 
they are {\em different\/} from another version of the identities
of nonsimply laced type obtained by the folding of simply laced one
\cite{C}.
Though the situation is more complicated than
the simply laced case,
a similar approach to the one here
is  applicable to prove the identities \cite{IIKKN1,IIKKN2}.
\end{remark}

The organization of the paper is as follows.
In section 2 we reformulate Conjecture \ref{conj:DI2}
in terms of cluster algebras.
In section 3 we study the tropical version
of the Y-system in the cluster algebra setting.
Proposition \ref{prop:monom} is a key observation through the paper.
In section 4 we prove Theorem \ref{thm:DI3},
which is equivalent to Theorem \ref{thm:main2},
by applying the method of \cite{FS} with
mixture of the ideas by \cite{CGT,C}.

\smallskip
{\em Acknowledgements.}
It is my great pleasure to thank
Atsuo Kuniba for sharing his insight
into the dilogarithm identities for
many years,
and also for useful comments on
the manuscript.

%on the occasion of his sixtieth birthday
% for his generous
%support 
% and continuous interest in my works
%through the long years.

\section{Reformulation by cluster algebras}
As the first step we reformulate
Theorem \ref{thm:main2} in terms of cluster algebras.

\subsection{Cluster algebras}

Here we collect some basic definitions for cluster algebras
\cite{FZ1,FZ2,FZ4} to fix the convention and notation,
mainly following  \cite{FZ4}.

(i) {\em Matrix mutation.}
An integer matrix
$B=(B_{ij})_{i,j\in I}$  is {\em skew-symmetrizable\/}
if there is a diagonal matrix $D=\mathrm{diag}
(d_i)_{i\in I}$ with $d_i\in \mathbb{N}$
such that $DB$ is skew-symmetric.
For a skew-symmetrizable matrix $B$ and
$k\in I$, another matrix $B'=\mu_k(B)$,
called the {\em mutation of $B$ at $k$\/}, is defined by
\begin{align}
\label{eq:Bmut}
B'_{ij}=
\begin{cases}
-B_{ij}& \mbox{$i=k$ or $j=k$},\\
B_{ij}+\frac{1}{2}
(|B_{ik}|B_{kj} + B_{ik}|B_{kj}|)
&\mbox{otherwise}.
\end{cases}
\end{align}
The matrix $\mu_k(B)$ is also skew-symmetrizable.

(ii)  {\em Exchange relation of coefficient tuple.}
A {\em semifield\/} $(\mathbb{P},\oplus)$ is an
abelian multiplicative group $\mathbb{P}$ endowed with a binary
operation of addition $\oplus$ which is commutative,
associative, and distributive with respect to the
multiplication in $\mathbb{P}$ \cite{FZ4,HW}. 
For an $I$-tuple $y=(y_i)_{i\in I}$, $y_i\in \mathbb{P}$
and $k\in I$, another $I$-tuple $y'$ is defined 
by the {\em exchange relation}
\begin{align}
\label{eq:coef}
y'_i =
\begin{cases}
\displaystyle
{y_k}{}^{-1}&i=k,\\
\displaystyle
y_i \left(\frac{y_k}{1\oplus {y_k}}\right)^{B_{ki}}&
i\neq k,\ B_{ki}\geq 0,\\
y_i (1\oplus y_k)^{-B_{ki}}&
i\neq k,\ B_{ki}\leq 0.\\
\end{cases}
\end{align}

(iii)  {\em Exchange relation of cluster.}
Let $\mathbb{QP}$ be the quotient field of the group ring 
$\mathbb{Z}\mathbb{P}$ of $\mathbb{P}$,
 and let $\mathbb{QP}(u)$ be the rational function field of
algebraically independent variables $u=(u_i)_{i\in I}$
over $\mathbb{QP}$.

For an $I$-tuple $x=(x_i)_{i\in I}$ which
is a free generating set of $\mathbb{QP}(u)$
and $k\in I$, another $I$-tuple $x'$ is defined 
by the {\em exchange relation}
\begin{align}
\label{eq:clust}
x'_i =
\begin{cases}
{x_k}&i\neq k,\\
\displaystyle
\frac{y_k
\prod_{j: B_{jk}>0} x_j^{B_{jk}}
+
\prod_{j: B_{jk}<0} x_j^{-B_{jk}}
}{(1\oplus y_k)x_k}
&
i= k.\\
\end{cases}
\end{align}

(iv) {\em Seed mutation.} For the above triplet $(B,x,y)$,
called a {\em seed},  the mutation
$\mu_k(B,x,y)=(B',x',y')$  at $k$ is defined 
 by combining (i)--(iii).

(v) {\em Cluster algebra}. Fix a semifield $\mathbb{P}$
and a seed ({\em initial seed\/}) $(B,x,y)$, where
$x=(x_i)_{i\in I}$  are algebraically independent variables
over $\mathbb{Q}\mathbb{P}$.
Starting from $(B,x,y)$, iterate mutations and collect all the
seeds $(B',x',y')$.
We call  $y'$  and $y'_i$ a {\em coefficient tuple} and
a {\em coefficient}, respectively.
We call  $x'$  and $x'_i\in \mathbb{Q}\mathbb{P}(x)$, a {\em cluster} and
a {\em cluster variable}, respectively.
The {\em cluster algebra $\mathcal{A}(B,x,y)$ with
coefficients in $\mathbb{P}$} is a
$\mathbb{Z}\mathbb{P}$-subalgebra of the
rational function field $\mathbb{Q}\mathbb{P}(x)$
generated by all the cluster variables.

For further necessary definitions and information
for cluster algebras, see \cite{FZ4}.

\subsection{Matrix $B(X_r,X'_{r'})$}

For a Cartan matrix $C=(C_{ij})_{i,j\in I}$ of finite type,
we say the decomposition
 $I=I_+\sqcup I_-$ is {\em bipartite} if
\begin{align}
\label{eq:C1}
\mbox{if $C_{ij}<0$, then $(i,j)\in I_+\times I_-$
or $(i,j)\in I_-\times I_+$}.
\end{align}

{}From now on, we assume that
$X_r$ and $X'_{r'}$  are a pair of simply laced
Dynkin diagrams of finite type and that
$C=(C_{ij})_{i,j\in I}$ and $C'=(C_{i'j'})_{i',j'\in I}$
are the Cartan matrices of
$X_r$ and $X'_{r'}$  with fixed
bipartite decompositions
$I=I_+\sqcup I_-$ and $I'=I'_+\sqcup I'_-$,
respectively.
Set $\mathbf{I}=I\times I'$.
For $\mathbf{i}=(i,i')\in \mathbf{I}$,
let us write $\mathbf{i}:(++)$ if $(i,i')\in I_+\times I'_+$, {\em etc}.
Define the matrix $B=B(X_r,X'_{r'})=
(B_{\mathbf{i}\mathbf{j}})_{\mathbf{i},\mathbf{j}
\in \mathbf{I}}$ by
\begin{align}
\label{eq:Bsq}
B_{\mathbf{i}\mathbf{j}}=
\begin{cases}
-C_{ij}\delta_{i'j'}
 &
 \mathbf{i}:(-+), \mathbf{j}:(++)
\ \mbox{or}\
 \mathbf{i}:(+-), \mathbf{j}:(--),
\\
C_{ij}\delta_{i'j'}
 &
 \mathbf{i}:(++), \mathbf{j}:(-+)
\ \mbox{or}\
 \mathbf{i}:(--), \mathbf{j}:(+-),
\\
-\delta_{ij}C'_{i'j'}
 &
 \mathbf{i}:(++), \mathbf{j}:(+-)
\ \mbox{or}\
 \mathbf{i}:(--), \mathbf{j}:(-+),
\\
\delta_{ij}C'_{i'j'}
 &
 \mathbf{i}:(+-), \mathbf{j}:(++)
\ \mbox{or}\
 \mathbf{i}:(-+), \mathbf{j}:(--),
\\
0 & \mbox{otherwise}.
\end{cases}
\end{align}
The rule \eqref{eq:Bsq} is visualized in
the diagram:
\begin{align}
\label{eq:square1}
\begin{matrix}
&& _{-C}&&\\
&(+-)& \rightarrow & (--)&\\
_{-C'}&\uparrow &&\downarrow& _{-C'}\\
&(++)&\leftarrow & (-+)&\\
&& _{-C}&&\\
\end{matrix}
\end{align}
The matrix $B$ corresponds to the {\em square product of
alternating quivers\/} by \cite{Kel1}.

\begin{lemma}
The matrix $B=B(X_r,X'_{r'})$ in \eqref{eq:Bsq} is skew-symmetric and
satisfies the following conditions:
Let $\mathbf{I}=\mathbf{I}_+\sqcup \mathbf{I}_-$ with
 $\mathbf{I}_+:=(I_+\times I'_+)\sqcup(I_-\times I'_-)$
and 
$\mathbf{I}_-:=(I_+\times I'_-)\sqcup(I_-\times I'_+)$.
Then,
\begin{align}
\label{eq:B1}
\mbox{if $B_{\mathbf{i}\mathbf{j}}\neq 0$,
 then $(\mathbf{i},\mathbf{j})\in 
\mathbf{I}_+\times \mathbf{I}_-$
or $(\mathbf{i},\mathbf{j})\in \mathbf{I}_-\times \mathbf{I}_+$}.
\end{align}
Furthermore, for composed mutations
$\mu_{+}=\prod_{\mathbf{i}\in \mathbf{I}_+} \mu_{\mathbf{i}}$
and $\mu_{-}=\prod_{\mathbf{i}\in \mathbf{I}_-} \mu_{\mathbf{i}}$,
\begin{align}
\label{eq:B2}
\mu_+(B)=\mu_-(B)=-B.
\end{align}
\end{lemma}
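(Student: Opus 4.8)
The plan is to verify the three assertions directly from the explicit formula \eqref{eq:Bsq}, exploiting the block structure displayed in the diagram \eqref{eq:square1}. First I would check skew-symmetry: since $C$ and $C'$ are symmetric (the Dynkin diagrams are simply laced), one compares $B_{\mathbf{i}\mathbf{j}}$ with $B_{\mathbf{j}\mathbf{i}}$ case by case. For instance, if $\mathbf{i}:(-+)$ and $\mathbf{j}:(++)$, then $B_{\mathbf{i}\mathbf{j}}=-C_{ij}\delta_{i'j'}$, while the pair $(\mathbf{j},\mathbf{i})$ falls in the ``$\mathbf{i}:(++), \mathbf{j}:(-+)$'' case so $B_{\mathbf{j}\mathbf{i}}=C_{ji}\delta_{j'i'}=C_{ij}\delta_{i'j'}$; the four ``$C$-type'' cases pair up in this way, and likewise the four ``$C'$-type'' cases. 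All remaining pairs give $B_{\mathbf{i}\mathbf{j}}=B_{\mathbf{j}\mathbf{i}}=0$. Hence $B^{\mathrm T}=-B$. In particular $B$ is skew-symmetrizable with $D=\mathrm{id}$, so the cluster-algebra machinery of Section 2.1 applies.

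Next, \eqref{eq:B1}. Reading off \eqref{eq:Bsq}, a nonzero entry $B_{\mathbf{i}\mathbf{j}}$ occurs only when $\{\mathbf{i},\mathbf{j}\}$ is one of the ordered pairs listed, and in every listed case exactly one of $\mathbf{i},\mathbf{j}$ is of type $(++)$ or $(--)$ and the other of type $(+-)$ or $(-+)$. Since $\mathbf{I}_+=(I_+\times I'_+)\sqcup(I_-\times I'_-)$ collects precisely the types $(++)$ and $(--)$, and $\mathbf{I}_-$ the types $(+-)$ and $(-+)$, this is exactly the statement that $(\mathbf{i},\mathbf{j})\in\mathbf{I}_+\times\mathbf{I}_-$ or $\mathbf{I}_-\times\mathbf{I}_+$. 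So the quiver of $B$ is bipartite with respect to $\mathbf{I}=\mathbf{I}_+\sqcup\mathbf{I}_-$. A clean way to see this structurally, rather than by enumeration, is to note that \eqref{eq:C1} for $C$ (resp.\ $C'$) forces $C_{ij}\neq 0$ with $i\neq j$ to cross the partition $I_+\sqcup I_-$ (resp.\ $I'_+\sqcup I'_-$), and to observe that each block arrow in \eqref{eq:square1} toggles exactly one of the two signs, hence moves between $\mathbf{I}_+$ and $\mathbf{I}_-$.

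Finally, \eqref{eq:B2}. Because the quiver of $B$ is bipartite with sinks/sources alternating between $\mathbf{I}_+$ and $\mathbf{I}_-$, for $\mathbf{i}\in\mathbf{I}_+$ all neighbors lie in $\mathbf{I}_-$, so the mutations $\{\mu_{\mathbf{i}}\}_{\mathbf{i}\in\mathbf{I}_+}$ commute with one another (mutations at a set of pairwise non-adjacent vertices commute), and $\mu_+=\prod_{\mathbf{i}\in\mathbf{I}_+}\mu_{\mathbf{i}}$ is well-defined independently of order; similarly for $\mu_-$. For a single mutation $\mu_k$ at a source or sink $k$ of a skew-symmetric $B$, the formula \eqref{eq:Bmut} simplifies: the ``otherwise'' term $B_{ij}+\tfrac12(|B_{ik}|B_{kj}+B_{ik}|B_{kj}|)$ vanishes because $B_{ik}$ and $B_{kj}$ have opposite signs for all $i,j$ when $k$ is a source (or sink), leaving only the sign flip on row/column $k$. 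I would then apply $\mu_+$ as the composition of such sign-flips at all $\mathbf{i}\in\mathbf{I}_+$: each entry $B_{\mathbf{i}\mathbf{j}}$ with $\mathbf{i}\in\mathbf{I}_+$, $\mathbf{j}\in\mathbf{I}_-$ gets its sign flipped once (when mutating at $\mathbf{i}$) and the entry $B_{\mathbf{j}\mathbf{i}}$ likewise once, while entries within $\mathbf{I}_+\times\mathbf{I}_+$ or $\mathbf{I}_-\times\mathbf{I}_-$ are already zero; hence $\mu_+(B)=-B$. The same computation with the roles of $\mathbf{I}_+$ and $\mathbf{I}_-$ exchanged gives $\mu_-(B)=-B$. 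The only point requiring care — and the one I would write out most carefully — is the commutativity and the vanishing of the quadratic term in \eqref{eq:Bmut} at a source/sink, since everything else is bookkeeping on the five cases of \eqref{eq:Bsq}; there is no genuine obstacle, only the need to handle the case analysis cleanly.
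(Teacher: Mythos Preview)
Your arguments for skew-symmetry and for \eqref{eq:B1} are correct and cleanly done; the paper itself simply defers to the quiver picture in \cite[Section~8]{Kel1}, so your direct verification is more self-contained. The gap is in your argument for \eqref{eq:B2}.

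You assert that each $\mathbf{k}\in\mathbf{I}_+$ is a source or sink of $B$, so that mutation at $\mathbf{k}$ reduces to a sign flip on row and column $\mathbf{k}$. This is false. Look at the block diagram \eqref{eq:square1}: it is a directed $4$-cycle, so a vertex of type $(++)$ receives arrows from its $(-+)$-neighbours and emits arrows to its $(+-)$-neighbours. Concretely, if $\mathbf{i}:(-+)$, $\mathbf{k}:(++)$, $\mathbf{j}:(+-)$ with $i\sim k$, $i'=k'$, $k=j$, $k'\sim j'$, then $B_{\mathbf{i}\mathbf{k}}=-C_{ik}=1$ and $B_{\mathbf{k}\mathbf{j}}=-C'_{k'j'}=1$, both positive, so the quadratic term in \eqref{eq:Bmut} contributes $+1$ to $B'_{\mathbf{i}\mathbf{j}}$. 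Thus $\mu_{\mathbf{k}}$ is \emph{not} merely a sign flip.

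What actually makes \eqref{eq:B2} hold is a cancellation in pairs: for each such $2$-path through a vertex $\mathbf{k}=(k,k')$ of type $(++)$, there is a companion $2$-path through the vertex $\mathbf{k}'=(j,i')$ of type $(--)$ (also in $\mathbf{I}_+$) with $B_{\mathbf{i}\mathbf{k}'}=-1$, $B_{\mathbf{k}'\mathbf{j}}=-1$, contributing $-1$. Because $\mathbf{I}_+$ is totally disconnected in $B$ (your \eqref{eq:B1}), the rows and columns indexed by $\mathbf{I}_+$ remain unchanged as you mutate through $\mathbf{I}_+$, so these two contributions to the $(\mathbf{i},\mathbf{j})$-entry simply add and cancel. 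Checking the remaining block cases ($\mathbf{i}:(-+),\mathbf{j}:(-+)$ and $\mathbf{i}:(+-),\mathbf{j}:(+-)$ give opposite signs at every $\mathbf{k}$, hence zero quadratic term) then yields $\mu_+(B)=-B$; the argument for $\mu_-$ is symmetric. This pairing of length-two paths is exactly the ``square'' in the square product, and is what you need to supply in place of the source/sink claim.
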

\begin{proof}
They are easily seen in the quiver picture in
\cite[Section 8]{Kel1}.
\end{proof}

Note that $\mu_{\pm}(B)$ does not depend on the order of the product
due to \eqref{eq:B1}.

%The condition \eqref{eq:B2} is also rephrased as follows:
%For any $\mathbf{i},\mathbf{j}\in \mathbf{I}_{\varepsilon}$ ($\varepsilon=\pm$),
%\begin{align}
%\label{eq:BB}
%\sum_{\mathbf{k}: B_{\mathbf{i}\mathbf{k}}>0,
%B_{\mathbf{k}\mathbf{j}}>0}
%B_{\mathbf{i}\mathbf{k}}B_{\mathbf{k}\mathbf{j}}
%=
%\sum_{\mathbf{k}: B_{\mathbf{i}\mathbf{k}}<0,B_{\mathbf{k}\mathbf{j}}<0}
%B_{\mathbf{i}\mathbf{k}}B_{\mathbf{k}\mathbf{j}}.
%\end{align}

\subsection{Cluster algebra  and Y-system}

For the matrix $B=B(X_r,X'_{r'})$
in \eqref{eq:Bsq},
let $\mathcal{A}(B,x,y)$ 
be the {\em cluster algebra
 with coefficients
in the universal
semifield
$\mathbb{Q}_{\mathrm{sf}}(y)$},
where $(B,x,y)$ is the initial seed  \cite{FZ4}.
(Here we use the symbol $+$ instead of $\oplus$ 
in $\mathbb{Q}_{\mathrm{sf}}(y)$,
since it is the ordinary addition of subtraction-free
expressions of rational functions of $y$.)

To our purpose, it is natural to introduce not only
the `ring of cluster variables' but also
the `group of coefficients'.

\begin{definition}
The {\em coefficient group $\mathcal{G}(B,y)$
associated with $\mathcal{A}(B,x,y)$}
is the multiplicative subgroup of
the semifield $\mathbb{Q}_{\mathrm{sf}}(y)$ generated by all
the coefficients $y_{\mathbf{i}}'$ of $\mathcal{A}(B,x,y)$
together with $1+y_{\mathbf{i}}'$.
\end{definition}

We set $x(0)=x$, $y(0)=y$ and define 
clusters $x(u)=(x_{\mathbf{i}}(u))_{\mathbf{i}\in \mathbf{I}}$
 ($u\in \mathbb{Z}$)
 and coefficient tuples $y(u)=(y_\mathbf{i}(u))_{\mathbf{i}\in \mathbf{I}}$
 ($u\in \mathbb{Z}$)
by the sequence of mutations
\begin{align}
\label{eq:QseqADE2}
\begin{split}
\cdots
& 
 \overset{\mu_-}{\longleftrightarrow}
(B,x(0),y(0))
\overset{\mu_+}{\longleftrightarrow}
(-B,x(1),y(1))
%\\
%&
 \overset{\mu_-}{\longleftrightarrow}
(B,x(2),y(2))
\overset{\mu_+}{\longleftrightarrow}
\cdots.
\end{split}
\end{align}

\begin{definition}
The {\em Y-subgroup
${\mathcal{G}}_Y(B,y)$
of ${\mathcal{G}}(B,y)$
associated with the sequence \eqref{eq:QseqADE2}}
is the multiplicative subgroup of
${\mathcal{G}}(B,y)$ 
generated by
$y_{\mathbf{i}}(u)$ and  $1+ y_{\mathbf{i}}(u)$
($\mathbf{i}\in \mathbf{I}, u\in \mathbb{Z}$).
\end{definition}

Let $\varepsilon:\mathbf{I} \rightarrow \{+,-\}$ be the sign
function defined by $\varepsilon(\mathbf{i})=\varepsilon$
 for $\mathbf{i}\in \mathbf{I}_{\varepsilon}$.
For $(\mathbf{i},u)\in \mathbf{I}\times \mathbb{Z}$,
we set the `parity conditions' $\mathbf{P}_{+}$ and
$\mathbf{P}_{-}$ by
\begin{align}
\label{eq:Pcond1}
\mathbf{P}_{\pm}:\quad
\varepsilon(\mathbf{i})(-1)^{u} = \pm,
\end{align}
where we identify $+$ and $-$ with $1$ and $-1$, respectively.
We write $(\mathbf{i},u):\mathbf{P}_{\varepsilon}$ if
$(\mathbf{i},u)$ satisfies the condition $\mathbf{P}_{\varepsilon}$.

\begin{lemma}[{cf. \cite[Lemma 6.18]{KNS}}]
\label{lem:y}
(1) $y_{\mathbf{i}}(u)=
y_{\mathbf{i}}(u\pm1)^{-1}$ for $(\mathbf{i},u): \mathbf{P}_{\pm}$.
\par
(2) The family $y_{\pm}=\{ y_{\mathbf{i}}(u)\mid
\mbox{$(\mathbf{i},u): \mathbf{P}_{\pm}$}
\}$ satisfies the Y-system $\mathbb{Y}(X_r,X'_{r'})$ in
${\mathcal{G}}_Y(B,y)$ by replacing $Y_{\mathbf{i}}(u)$
in $\mathbb{Y}(X_r,X'_{r'})$ with $y_{\mathbf{i}}(u)^{\pm1}$.
\end{lemma}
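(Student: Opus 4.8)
The plan is to verify Lemma \ref{lem:y} by direct computation from the mutation rules \eqref{eq:coef}, exploiting the alternating structure \eqref{eq:B1}--\eqref{eq:B2} of the sequence \eqref{eq:QseqADE2}. First I would fix a sign $\varepsilon\in\{+,-\}$ and an index $\mathbf{i}$ with $\varepsilon(\mathbf{i})=\varepsilon$, and consider a step of the sequence in which $\mathbf{i}$ is mutated, say $(B,x(u),y(u))\overset{\mu_\varepsilon}{\longleftrightarrow}(-B,x(u+1),y(u+1))$ when $(-1)^u$ agrees with $\varepsilon$ appropriately. Since $\mathbf{i}\in\mathbf{I}_\varepsilon$ and the composed mutation $\mu_\varepsilon=\prod_{\mathbf{j}\in\mathbf{I}_\varepsilon}\mu_{\mathbf{j}}$ mutates all indices of the same $\mathbf{I}_\varepsilon$-part simultaneously, the first case of \eqref{eq:coef} gives immediately $y_{\mathbf{i}}(u+1)=y_{\mathbf{i}}(u)^{-1}$: the mutation at $\mathbf{i}$ inverts $y_{\mathbf{i}}$, and none of the other mutations $\mu_{\mathbf{j}}$, $\mathbf{j}\in\mathbf{I}_\varepsilon$, $\mathbf{j}\neq\mathbf{i}$, affects the $\mathbf{i}$-component because $B_{\mathbf{j}\mathbf{i}}=0$ for $\mathbf{j}$ in the same part as $\mathbf{i}$ by \eqref{eq:B1}. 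Reading off the parity bookkeeping in \eqref{eq:Pcond1}, this is exactly statement (1); the two neighboring steps $u-1$ and $u+1$ are handled symmetrically.

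Next, for statement (2), I would combine two consecutive mutation steps $y(u-1)\to y(u)\to y(u+1)$ and track the non-inverted component $y_{\mathbf{i}}(u)$ through the exchange relation. Writing out \eqref{eq:coef} for the indices $\mathbf{j}\neq\mathbf{i}$, a given $y_{\mathbf{j}}$ picks up a factor $\bigl(y_{\mathbf{i}}/(1+y_{\mathbf{i}})\bigr)^{B_{\mathbf{i}\mathbf{j}}}$ or $(1+y_{\mathbf{i}})^{-B_{\mathbf{i}\mathbf{j}}}$ according to the sign of $B_{\mathbf{i}\mathbf{j}}$. Multiplying the contributions from the step at $u-1\to u$ and $u\to u+1$, and using $\mu_+(B)=\mu_-(B)=-B$ from \eqref{eq:B2} to keep track of how the exchange matrix alternates between $B$ and $-B$, the product $y_{\mathbf{i}}(u-1)y_{\mathbf{i}}(u+1)$ collects exactly a factor $\prod_{\mathbf{j}}(1+y_{\mathbf{j}}(u))^{B_{\mathbf{i}\mathbf{j}}}$ (with the convention that this means $\prod_{B_{\mathbf{i}\mathbf{j}}>0}(1+y_{\mathbf{j}})^{B_{\mathbf{i}\mathbf{j}}}$ in the numerator and the reciprocal for negative entries). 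Feeding in the explicit form of $B=B(X_r,X'_{r'})$ from \eqref{eq:Bsq} — where the nonzero entries are $\pm C_{ij}\delta_{i'j'}$ and $\pm\delta_{ij}C'_{i'j'}$, and on the off-diagonal $C_{ij}=-1$ exactly when $j\sim i$ — this product becomes $\prod_{j\sim i}(1+y_{ji'}(u))\big/\prod_{j'\sim i'}(1+y_{ij'}(u))$, which after identifying $Y_{\mathbf{i}}(u)$ with $y_{\mathbf{i}}(u)^{\pm1}$ (and using $1+Y^{-1}=(1+Y)/Y$ to flip factors in the denominator of \eqref{eq:Y2}) is precisely the Y-system relation \eqref{eq:Y2}. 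All of this takes place inside $\mathcal{G}_Y(B,y)$ by construction, since every $y_{\mathbf{i}}(u)$ and $1+y_{\mathbf{i}}(u)$ is a generator of that group.

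The only real subtlety — the part I expect to need the most care — is the sign/parity bookkeeping: making sure that, for a fixed $(\mathbf{i},u)$ satisfying $\mathbf{P}_\varepsilon$, the correct one of the two branches of \eqref{eq:coef} (the $B_{\mathbf{i}\mathbf{j}}\geq 0$ branch versus the $B_{\mathbf{i}\mathbf{j}}\leq 0$ branch) is selected at each of the two steps, and that the factors of $y_{\mathbf{i}}$ versus $1+y_{\mathbf{i}}$ combine to leave only $1+y_{\mathbf{j}}(u)$ factors with no residual powers of $y_{\mathbf{i}}$ itself. Since the exchange matrix alternates sign between $B$ and $-B$ along \eqref{eq:QseqADE2}, a $B_{\mathbf{i}\mathbf{j}}>0$ entry at step $u-1\to u$ becomes effectively the opposite-sign branch at step $u\to u+1$, which is exactly what makes the pure powers of $y_{\mathbf{i}}$ cancel and the $(1+y_{\mathbf{i}})$ factors reinforce into the shape of \eqref{eq:Y2}. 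This is the same computation as in \cite[Lemma 6.18]{KNS} (cited in the statement), adapted to the square-product matrix $B(X_r,X'_{r'})$; once the branch selection is organized by the parity condition \eqref{eq:Pcond1}, the rest is a routine matching of indices against \eqref{eq:Bsq} and \eqref{eq:Y2}.
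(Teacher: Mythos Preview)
Your approach is the same as the paper's, which proves the lemma in a single line: ``This follows from the exchange relation \eqref{eq:coef}.'' Your proposal simply unpacks this, and the overall strategy --- in one of the two steps $\mathbf{i}$ is mutated (inverting $y_{\mathbf{i}}$), in the other the neighbors $\mathbf{k}\in\mathbf{I}_{-\varepsilon(\mathbf{i})}$ are mutated (contributing the Y-system factors to $y_{\mathbf{i}}$) --- is exactly right.

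That said, the explicit computation in your second paragraph needs tightening. When you write ``a given $y_{\mathbf{j}}$ picks up a factor $(y_{\mathbf{i}}/(1+y_{\mathbf{i}}))^{B_{\mathbf{i}\mathbf{j}}}$\dots'' you have the roles reversed: what matters is how $y_{\mathbf{i}}$ changes when each $\mathbf{k}\in\mathbf{I}_{-\varepsilon(\mathbf{i})}$ is mutated. Doing that correctly with the matrix $(-1)^{u}B$ in force at time $u$, the factor picked up is $(1+y_{\mathbf{k}}(u))$ when $\mathbf{k}=(k,i')$, $k\sim i$, and $\bigl(y_{\mathbf{k}}(u)/(1+y_{\mathbf{k}}(u))\bigr)=(1+y_{\mathbf{k}}(u)^{-1})^{-1}$ when $\mathbf{k}=(i,k')$, $k'\sim i'$. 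Thus $y_{\mathbf{i}}(u-1)y_{\mathbf{i}}(u+1)$ equals $\prod_{j\sim i}(1+y_{ji'}(u))\big/\prod_{j'\sim i'}(1+y_{ij'}(u)^{-1})$ \emph{directly} --- the $1+Y^{-1}$ appears naturally, and no ``flip'' via $1+Y^{-1}=(1+Y)/Y$ is needed. Your version with $(1+y_{ij'}(u))$ in the denominator followed by a flip would leave stray powers of $y_{ij'}(u)$. Once this is corrected the rest goes through as you say.
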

\begin{proof}
This follows from the exchange relation \eqref{eq:coef}.
\end{proof}

\begin{definition}
\label{def:YB}
Let $\EuScript{Y}(X_r,X'_{r'})$
be the semifield with generators
$Y_{\mathbf{i}}(u)$  ($\mathbf{i}\in \mathbf{I},u\in \mathbb{Z} $)
and the relations $\mathbb{Y}(X_r,X'_{r'})$.
Let $\EuScript{Y}^{\circ}(X_r,X'_{r'})$
be the multiplicative subgroup
of $\EuScript{Y}(X_r,X'_{r'})$
generated by
$Y_{\mathbf{i}}(u)$, $1+Y_{\mathbf{i}}(u)$
 ($\mathbf{i}\in \mathbf{I},u\in \mathbb{Z} $).
(Here we use the symbol $+$ instead of $\oplus$ 
for simplicity.)
\end{definition}

Define $\EuScript{Y}^{\circ}(X_r,X'_{r'})_{\varepsilon}$
($\varepsilon=\pm$)
to be the subgroup of $\EuScript{Y}^{\circ}(X_r,X'_{r'})$
generated by
those $Y_{\mathbf{i}}(u)$, $1+Y_{\mathbf{i}}(u)$
with  $(\mathbf{i},u):\mathbf{P}_{\varepsilon}$.
Then, we have
$\EuScript{Y}^{\circ}(X_r,X'_{r'})_+
\simeq
\EuScript{Y}^{\circ}(X_r,X'_{r'})_-
$
by $Y_{\mathbf{i}}(u)\mapsto Y_{\mathbf{i}}(u+1)$ and
\begin{align}
\EuScript{Y}^{\circ}(X_r,X'_{r'})
\simeq
\EuScript{Y}^{\circ}(X_r,X'_{r'})_+
\times
\EuScript{Y}^{\circ}(X_r,X'_{r'})_-.
\end{align}

\begin{proposition}[{cf. \cite[Theorem 6.19]{KNS}}]
\label{prop:YAA}
The group $\EuScript{Y}^{\circ}(X_r,X'_{r'})_{\pm}$
is isomorphic to
$
{\mathcal{G}}_Y(B,y)
$
by the correspondence $Y_{\mathbf{i}}(u)\mapsto y_{\mathbf{i}}(u)^{\pm1}$,
$1+Y_{\mathbf{i}}(u)\mapsto 1+ y_{\mathbf{i}}(u)^{\pm1}$
for $(\mathbf{i},u):\mathbf{P}_{\pm}$.
\end{proposition}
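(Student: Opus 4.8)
The plan is to exhibit an explicit inverse-pair of semifield/group homomorphisms and check they respect the defining relations. Fix $\varepsilon=+$ (the case $\varepsilon=-$ is entirely parallel, and in any case follows from the isomorphism $\EuScript{Y}^{\circ}(X_r,X'_{r'})_+\simeq\EuScript{Y}^{\circ}(X_r,X'_{r'})_-$ already noted before the statement). First I would observe that, by construction, $\EuScript{Y}(X_r,X'_{r'})$ is the \emph{universal} semifield generated by the symbols $Y_{\mathbf{i}}(u)$ subject only to the relations $\mathbb{Y}(X_r,X'_{r'})$; hence to define a semifield homomorphism out of it into $\mathbb{Q}_{\mathrm{sf}}(y)$ it suffices to assign to each generator $Y_{\mathbf{i}}(u)$ (with $(\mathbf{i},u):\mathbf{P}_+$) an element of $\mathbb{Q}_{\mathrm{sf}}(y)$ in such a way that the relations $\mathbb{Y}(X_r,X'_{r'})$ are satisfied by the images. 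By Lemma \ref{lem:y}(2), the assignment $Y_{\mathbf{i}}(u)\mapsto y_{\mathbf{i}}(u)$ does exactly this: the family $y_+$ satisfies $\mathbb{Y}(X_r,X'_{r'})$ inside $\mathcal{G}_Y(B,y)\subseteq\mathbb{Q}_{\mathrm{sf}}(y)$. This yields a semifield homomorphism $\varphi$, and restricting to the multiplicative subgroup generated by the $Y_{\mathbf{i}}(u)$ and $1+Y_{\mathbf{i}}(u)$ with $(\mathbf{i},u):\mathbf{P}_+$ gives a group homomorphism $\varphi^{\circ}\colon \EuScript{Y}^{\circ}(X_r,X'_{r'})_+\to \mathcal{G}_Y(B,y)$, which is surjective since the $y_{\mathbf{i}}(u)$ and $1+y_{\mathbf{i}}(u)$ with $(\mathbf{i},u):\mathbf{P}_+$ generate $\mathcal{G}_Y(B,y)$ (here one uses Lemma \ref{lem:y}(1) to see that the generators with the opposite parity are inverses of those already present, so no information is lost by restricting to $\mathbf{P}_+$).

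The remaining, and main, task is injectivity of $\varphi^{\circ}$, equivalently the construction of a left inverse. The idea is to build a map $\psi\colon \mathcal{G}_Y(B,y)\to \EuScript{Y}^{\circ}(X_r,X'_{r'})_+$ sending $y_{\mathbf{i}}(u)\mapsto Y_{\mathbf{i}}(u)^{\pm1}$ according to parity and $1+y_{\mathbf{i}}(u)\mapsto \cdots$ correspondingly, and to check it is well defined. The natural route is to use the known structure of the coefficient group of a cluster algebra with coefficients in a universal semifield: $\mathbb{Q}_{\mathrm{sf}}(y)$ is a \emph{free} object, so every coefficient $y_{\mathbf{j}}'$ appearing anywhere in $\mathcal{A}(B,x,y)$ is a subtraction-free rational monomial in the $y_{\mathbf{i}}$ and the factors $1+(\text{monomials})$; in particular the separation-of-variables / $F$-polynomial formalism of Fomin–Zelevinsky \cite{FZ4} expresses every coefficient produced along the sequence \eqref{eq:QseqADE2} as a Laurent monomial in $y$ times a product of $F$-polynomials evaluated at $y$. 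Using this, one shows that a relation holding among the $y_{\mathbf{i}}(u)$ and $1+y_{\mathbf{i}}(u)$ in $\mathcal{G}_Y(B,y)$ must already be a formal consequence of the exchange relations \eqref{eq:coef} along \eqref{eq:QseqADE2}, hence — via Lemma \ref{lem:y} — of the Y-system relations $\mathbb{Y}(X_r,X'_{r'})$; this gives that $\psi$ is well defined on generators and relations, so it descends to a group homomorphism, and $\psi\circ\varphi^{\circ}=\mathrm{id}$ by construction. An alternative, and probably cleaner, formulation: invoke \cite[Theorem 6.19]{KNS} and its proof, which establishes precisely this kind of isomorphism between the abstract Y-system group and the coefficient Y-subgroup in the closely related setting of level-restricted Y-systems, and adapt its argument verbatim to the present square-product matrix $B(X_r,X'_{r'})$; the only thing needing checking is that the combinatorics of \eqref{eq:Bsq} and \eqref{eq:QseqADE2} match the hypotheses there, which is straightforward given \eqref{eq:B1} and \eqref{eq:B2}.

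I expect the genuine obstacle to be the injectivity step, i.e.\ ruling out "accidental" relations in $\mathcal{G}_Y(B,y)$ that are not consequences of $\mathbb{Y}(X_r,X'_{r'})$. Surjectivity and the existence of $\varphi^{\circ}$ are immediate from Lemma \ref{lem:y}; the content is that the universal-semifield choice of coefficients is "as free as possible," so that $\mathcal{G}_Y(B,y)$ has no more relations than $\EuScript{Y}^{\circ}(X_r,X'_{r'})_+$ does. The cleanest way to secure this is to cite the freeness properties of $\mathbb{Q}_{\mathrm{sf}}(y)$ together with the separation formulas of \cite[Section 3]{FZ4} (which guarantee that distinct reduced subtraction-free expressions in $y$ are distinct elements), reducing the claim to the statement that the formal Y-system relations generate \emph{all} relations among the symbols $y_{\mathbf{i}}(u)$, $1+y_{\mathbf{i}}(u)$ — and this last point is exactly what \cite[Theorem 6.19]{KNS} proves in an analogous situation, so the honest work is the bookkeeping needed to transplant that proof to the matrix $B(X_r,X'_{r'})$ and the bipartite double sequence \eqref{eq:QseqADE2}.
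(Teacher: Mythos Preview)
The paper does not supply a proof of this proposition: it is stated with the parenthetical ``cf.\ \cite[Theorem 6.19]{KNS}'' and no argument follows. Your proposal is therefore not so much a different route as an unpacking of what the citation is meant to convey. Your second route---transplanting the proof of \cite[Theorem 6.19]{KNS} to the matrix $B(X_r,X'_{r'})$ and the bipartite mutation sequence \eqref{eq:QseqADE2}, using \eqref{eq:B1}--\eqref{eq:B2} to check the hypotheses---is exactly what the paper's ``cf.'' signals, and your identification of injectivity (i.e.\ ruling out extra relations in $\mathcal{G}_Y(B,y)$ via the freeness of $\mathbb{Q}_{\mathrm{sf}}(y)$) as the one step with content is correct.

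One small cleanup: when you build the forward map $\varphi$, you should assign values to \emph{all} generators $Y_{\mathbf{i}}(u)$, not only those with $(\mathbf{i},u):\mathbf{P}_+$, since $\EuScript{Y}(X_r,X'_{r'})$ is presented on the full set; but this is harmless, because the Y-system relations respect the parity decomposition (as noted just before the proposition), so one may define $\varphi$ on each factor separately and then restrict. With that adjustment your sketch is sound.
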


In summary,  the Laurent monomials in $Y_{\mathbf{i}}(u)$
and $1+Y_{\mathbf{i}}(u)$  with $(\mathbf{i},u):\mathbf{P}_{+}$
are embedded
in the coefficient group
$
{\mathcal{G}}(B,y)
$.
%$\mathbb{Q}_{\mathrm{sf}}(y)$.

\subsection{Reformulation of Theorem \ref{thm:main2}}

We recall the periodicity theorem, originally conjectured by
\cite{RTV}:

\begin{theorem}[{\cite{Kel1,Kel2}}]
\label{thm:period2}
In ${\mathcal{G}}(B,y)$,
the following relations hold:

(i) Periodicity: $y_{\mathbf{i}}(u+2(h+h'))=y_{\mathbf{i}}(u)$.

(ii) Half periodicity: $y_{ii'}(u+(h+h'))=y_{\omega(i)\omega'(i')}(u)$,
where $\omega$ (resp.\ $\omega'$) is the Dynkin automorphism
of $X_r$ (resp. $X'_{r'}$) for types $A_r$, $D_{r}$ ($r:odd$), or $E_6$,
and the identity otherwise.
\end{theorem}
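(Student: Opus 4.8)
The plan is to deduce Theorem~\ref{thm:period2} from the periodicity of \emph{cluster variables} (equivalently, the periodicity of the cluster pattern) for the matrix $B=B(X_r,X'_{r'})$, which is the form of the periodicity theorem established by Keller~\cite{Kel1,Kel2}. The first step is to recall that $B$ is precisely the square product of alternating quivers of types $X_r$ and $X'_{r'}$, so that the sequence \eqref{eq:QseqADE2} of alternating composed mutations $\mu_+,\mu_-$ is exactly the sequence whose periodicity Keller proves: one has $\mu_-\mu_+\cdots$ returning the seed $(B,x,y)$ (up to a permutation of $\mathbf{I}$) after $2(h+h')$ applications, and up to the permutation $(\omega,\omega')$ after $h+h'$ applications. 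Since $\mu_+(B)=\mu_-(B)=-B$ by \eqref{eq:B2}, the $B$-matrix itself is already periodic with period $2$ in the sequence, so the only content is the periodicity of the $x$- and $y$-parts.

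Next I would transfer the statement from cluster variables to coefficients. In the cluster algebra with coefficients in the universal semifield $\mathbb{Q}_{\mathrm{sf}}(y)$, the coefficients $y_{\mathbf{i}}(u)$ are determined by the cluster pattern via the separation-of-additions/Laurent formulas of Fomin--Zelevinsky~\cite{FZ4}; concretely, each $y_{\mathbf{i}}(u)$ is the $F$-polynomial-type combination built from the same mutation data that governs the cluster variables. Hence a periodicity of the whole seed pattern (with the stated permutation) forces the corresponding periodicity of the coefficient tuples $y(u)$ inside $\mathcal{G}(B,y)$. This gives (i) $y_{\mathbf{i}}(u+2(h+h'))=y_{\mathbf{i}}(u)$ and (ii) $y_{ii'}(u+(h+h'))=y_{\omega(i)\omega'(i')}(u)$, where the permutation of the index set $\mathbf{I}$ induced by the half-period is exactly the product of Dynkin automorphisms $\omega\times\omega'$, which is nontrivial only in types $A_r$, $D_r$ with $r$ odd, and $E_6$. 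One should note that the relation \eqref{eq:Pcond1} of parity conditions is compatible: the half-period shifts $u$ by $h+h'$, and since $h+h'$ has the parity needed to intertwine $\mathbf{P}_+$ and $\mathbf{P}_-$ appropriately, the statement is consistent with Lemma~\ref{lem:y} and Proposition~\ref{prop:YAA}.

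The last step is simply to invoke Keller's result in the exact form we need. Keller proves periodicity of the $Y$-system $\mathbb{Y}(X_r,X'_{r'})$ directly — phrased for the $Y$-seeds (the pairs $(B,y)$) — via the periodicity of the cluster category / the Coxeter-type autoequivalence on the relevant triangulated category, and the period and half-period are read off from the Coxeter numbers $h,h'$. By Proposition~\ref{prop:YAA} the abstract $Y$-system group $\EuScript{Y}^{\circ}(X_r,X'_{r'})_\pm$ is isomorphic to $\mathcal{G}_Y(B,y)\subseteq\mathcal{G}(B,y)$, so Keller's periodicity statement, which a priori lives in $\EuScript{Y}(X_r,X'_{r'})$, lands inside $\mathcal{G}(B,y)$ as stated. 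The main obstacle — and it is not one we re-prove here — is precisely Keller's theorem; in the present paper this is taken as input, so the proof reduces to the bookkeeping of matching conventions: checking that the square-product quiver and the alternating mutation sequence \eqref{eq:QseqADE2} are the ones Keller uses, and that the induced index-set permutation at the half-period is the claimed $\omega\times\omega'$. This matching is routine and is essentially the content of \cite[Section~8]{Kel1}, so I would state Theorem~\ref{thm:period2} as a direct consequence with a one-line proof citing \cite{Kel1,Kel2}.
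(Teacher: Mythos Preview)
Your proposal is correct and takes essentially the same approach as the paper: both treat the theorem as a direct consequence of Keller's periodicity results \cite{Kel1,Kel2}, with the only work being to match conventions between the square-product quiver, the alternating mutation sequence \eqref{eq:QseqADE2}, and the coefficient group $\mathcal{G}(B,y)$. The one minor difference is that for the half-periodicity (ii), the paper does not claim the identification of the index-set permutation as $\omega\times\omega'$ follows from \cite{Kel1} alone; it combines \cite[Theorem~7.13]{Kel1} with the proof of \cite[Theorem~4.27]{IIKNS} to pin down this permutation, whereas you assert this is routine bookkeeping contained in \cite[Section~8]{Kel1}---so your one-line citation for (ii) should probably include the additional reference.
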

\begin{proof}
(i). This is due to \cite[Theorem 8.2]{Kel1} or \cite[Theorem 2.3]{Kel2}.
%\par
(ii). This is obtained by combining
 \cite[Theorem 7.13]{Kel1} and the proof
of \cite[Theorem 4.27]{IIKNS}.
\end{proof}

Let $\mathbb{R}_+$ be the semifield of the positive real numbers
by the usual multiplication and addition.
By Proposition \ref{prop:YAA}, Theorem \ref{thm:main2}
is equivalent to the following one:
\begin{theorem}
\label{thm:DI3}
Let $y_{\mathbf{i}}(u)\in \mathbb{Q}_{\mathrm{sf}}(y)$
$(\mathbf{i}\in \mathbf{I}$, $u\in \mathbb{Z})$
be as above.
Let  $\varphi: \mathbb{Q}_{\mathrm{sf}}(y)
\rightarrow \mathbb{R}_+$ be any
semifield homomorphism.
Then, the following identities hold:
\begin{align}\label{eq:DI4}
\frac{6}{\pi^2}
\sum_{
(\mathbf{i},u)\in S_+}
L\left(
\frac{\varphi(y_{\mathbf{i}}(u))}{1+\varphi(y_{\mathbf{i}}(u))}
\right)
&=
h r r',\\
\label{eq:DI5}
\frac{6}{\pi^2}
\sum_{
(\mathbf{i},u)\in S_-}
L\left(
\frac{\varphi(y_{\mathbf{i}}(u))}{1+\varphi(y_{\mathbf{i}}(u))}
\right)
&=
h' r r',
\end{align}
where $S_{\pm}=\{(\mathbf{i},u)\mid \mathbf{i}\in I,
0\leq u \leq 2(h+h')-1, \mbox{$(\mathbf{i},u):
\mathbf{P}_{\pm}$}\}$. Also,
\begin{align}\label{eq:DI6}
\frac{6}{\pi^2}
\sum_{
(\mathbf{i},u)\in H_+}
L\left(
\frac{\varphi(y_{\mathbf{i}}(u))}{1+\varphi(y_{\mathbf{i}}(u))}
\right)
&=
\frac{h r r'}{2},\\
\label{eq:DI7}
\frac{6}{\pi^2}
\sum_{
(\mathbf{i},u)\in H_-}
L\left(
\frac{\varphi(y_{\mathbf{i}}(u))}{1+\varphi(y_{\mathbf{i}}(u))}
\right)
&=
\frac{h' r r'}{2},
\end{align}
where $H_{\pm}=\{(\mathbf{i},u)\mid \mathbf{i}\in I,
0\leq u \leq (h+h')-1, \mbox{$(\mathbf{i},u):\mathbf{P}_{\pm}$}\}$.
\end{theorem}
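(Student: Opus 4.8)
The plan is to follow the Frenkel--Szenes strategy of establishing a \emph{constancy property} for the left-hand sides of \eqref{eq:DI4}--\eqref{eq:DI7}, viewed as functions of the semifield homomorphism $\varphi$, and then to evaluate the constant at one convenient point. Concretely, I would pick a one-parameter family of homomorphisms --- or, better, work formally in $\mathbb{Q}_{\mathrm{sf}}(y)$ --- and differentiate the sum $\Phi_+(\varphi):=\sum_{(\mathbf{i},u)\in S_+} L\bigl(\varphi(y_{\mathbf{i}}(u))/(1+\varphi(y_{\mathbf{i}}(u)))\bigr)$ with respect to the parameters. Using the classical identity $dL\bigl(x/(1+x)\bigr) = -\tfrac12\bigl(\log(1+x)\,d\log x\bigr)$ (equivalently $\tfrac12(\log x\, d\log(1+x) - \log(1+x)\, d\log x)$ up to exact terms), the derivative of $\Phi_+$ becomes a sum of terms $\log(1+y_{\mathbf{i}}(u))\, d\log y_{\mathbf{i}}(u)$ over $S_+$. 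The key claim is that, after summing over a full period $0\le u\le 2(h+h')-1$ and using the Y-system relations \eqref{eq:Y2} (which hold in ${\mathcal{G}}_Y(B,y)$ by Lemma \ref{lem:y}) together with the periodicity of Theorem \ref{thm:period2}(i), all these terms cancel in pairs, so $d\Phi_+\equiv 0$. This is exactly the mechanism of \cite{FS}; the point of the cluster-algebra reformulation is that the Y-system relation, rewritten multiplicatively as $y_{\mathbf{i}}(u-1)y_{\mathbf{i}}(u+1)=\prod_j(1+y_{j i'}(u))^{[\text{something}]}\cdots$, makes the telescoping transparent once one organizes the sum by the mutation sequence \eqref{eq:QseqADE2}.

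The cleanest way to run the cancellation, following the idea of \cite{CGT,C}, is to split $d\log y_{\mathbf{i}}(u)$ using the exchange relation \eqref{eq:coef}: at each mutation step $u\rightsquigarrow u+1$ (say via $\mu_+$), for $\mathbf{i}$ in the mutated part one has $y_{\mathbf{i}}(u+1)=y_{\mathbf{i}}(u)^{-1}$ while for $\mathbf{i}$ in the frozen part $y_{\mathbf{i}}(u+1)=y_{\mathbf{i}}(u)\prod_{\mathbf{j}}\bigl(y_{\mathbf{j}}(u)/(1+y_{\mathbf{j}}(u))\bigr)^{[B_{\mathbf{j}\mathbf{i}}]_+}(1+y_{\mathbf{j}}(u))^{[-B_{\mathbf{j}\mathbf{i}}]_+}$ by \eqref{eq:coef}. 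Substituting, the total differential $\sum_{S_+}\log(1+y_{\mathbf{i}}(u))\,d\log y_{\mathbf{i}}(u)$ splits into a diagonal part and an off-diagonal part indexed by pairs $(\mathbf{i},\mathbf{j})$ with $B_{\mathbf{i}\mathbf{j}}\ne0$; the skew-symmetry of $B$ (Lemma, eq.~\eqref{eq:B1}) forces the off-diagonal contributions to come in antisymmetric pairs that kill each other, and the half-periodicity \eqref{eq:B2}/Theorem \ref{thm:period2} handles the boundary terms of the telescoping over one period. For \eqref{eq:DI6}--\eqref{eq:DI7} one uses the half-periodicity of Theorem \ref{thm:period2}(ii) instead, which matches $(\mathbf{i},u)$ with $(\omega\omega'(\mathbf{i}),u+(h+h'))$ and makes the sum over the half-period $H_\pm$ equal to half the sum over $S_\pm$; since $L$ is invariant under the Dynkin automorphism relabeling, this is immediate once \eqref{eq:DI4}--\eqref{eq:DI5} are known.

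Having shown $\Phi_\pm$ is independent of $\varphi$, it remains to compute its value. Here I would take the \emph{constant solution}: let $\varphi$ be the homomorphism sending every $y_{\mathbf{i}}(u)$ to the value $Y_{\mathbf{i}}$ of the unique positive real solution of the level-type system \eqref{eq:y1} (for $\Phi_+$, with $X'_{r'}=A_{h'-1}$), whose existence is Theorem 1.1; then $\Phi_+$ reduces to the already-studied sum in Conjecture \ref{conj:DI}, for which the value $\tfrac{\pi^2}{6}hrr'$ is known either from Kirillov's analytic computation in type $A$ plus level-rank duality, or --- more uniformly --- by evaluating at the explicit trigonometric solution and using the known specializations in the rank-one cases $(A_1,A_1)$ and $(A_2,A_1)$, which reduce \eqref{eq:DI4} to \eqref{eq:L2} and \eqref{eq:L3} respectively, together with an induction on rank via the Y-system. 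Alternatively one may evaluate at the degenerate limit $\varphi(y_{\mathbf{i}}(u))\to 0$ or $\to\infty$ along a mutation path, tracking $L(0)=0$, $L(1)=\pi^2/6$; the bookkeeping of how many arguments hit $0$ versus $1$ is precisely counted by the tropical $Y$-system, and this is where Proposition \ref{prop:monom} (the ``key observation'' advertised in the introduction) should enter.

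The main obstacle I anticipate is not the constancy argument, which is essentially formal once the exchange relations are in hand, but rather \emph{the evaluation of the constant in full generality} without circularly invoking Conjecture \ref{conj:DI}: one must either import Kirillov's type-$A$ result and bootstrap via level-rank duality and the $(X_r,A_1)$ case of \cite{C}, or carry out a self-contained limit computation in which the combinatorics of the tropical $Y$-system (sign-coherence of $c$-vectors, the count of positive vs.\ negative tropical signs over one period) must be pinned down exactly. Making that count rigorous for an arbitrary pair $(X_r,X'_{r'})$ --- equivalently, showing that along a suitable path the number of $L$-arguments degenerating to $1$ totals $hrr'$ --- is the step I expect to require the real work, and it is where Proposition \ref{prop:monom} and the periodicity Theorem \ref{thm:period2} must be combined most carefully.
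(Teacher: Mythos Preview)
Your two–step outline (constancy, then evaluation at a degenerate limit) is exactly the paper's strategy, and you have correctly identified Proposition~\ref{prop:monom} as the engine for the evaluation. Two points of divergence are worth noting.

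\emph{Constancy.} The paper does not run the cancellation directly through the exchange relation~\eqref{eq:coef} as you sketch. Instead it proves the equivalent statement $\sum_{S_+} y_{\mathbf{i}}(u)\wedge(1+y_{\mathbf{i}}(u))=0$ in $\bigwedge^2\mathbb{Q}_{\mathrm{sf}}(y)$ by invoking the \emph{$F$-polynomial separation formula} of \cite{FZ4}: one writes $y_{\mathbf{i}}(u)=[y_{\mathbf{i}}(u)]_{\mathrm{T}}\cdot(\text{ratio of }F\text{'s})$ and $1+y_{\mathbf{i}}(u)=[1+y_{\mathbf{i}}(u)]_{\mathrm{T}}\cdot(\text{ratio of }F\text{'s})$, expands the wedge, and checks that (a) the purely tropical piece vanishes by sign-coherence (Proposition~\ref{prop:monom}~(i)), (b) the pure $F$–$F$ pieces vanish by the symmetry of the incidence matrices, and (c) the five mixed tropical–$F$ pieces cancel via the (tropicalized) Y-system. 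Your direct telescoping may well work, but the $F$-polynomial route is what makes the bookkeeping clean; the paper even remarks that the analogous $T$-system parametrization used in \cite{CGT} is \emph{not} general enough, which is why $F$-polynomials are substituted.

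\emph{Evaluation.} Your first proposal, the constant solution, is indeed circular and should be discarded. The paper goes straight to your ``alternative'': take $\varphi_t(y_{\mathbf{i}})=t$ for all $\mathbf{i}$ and send $t\to 0$. The $F$-polynomial formula together with $F_{\mathbf{i}}(u)\big|_{y=0}=1$ shows that $\varphi_t(y_{\mathbf{i}}(u))\to 0$ or $\infty$ according to the sign of $[y_{\mathbf{i}}(u)]_{\mathrm{T}}$, so the limit of $\Phi_+$ is $(\pi^2/6)\cdot N_-$, and Proposition~\ref{prop:monom}~(iii) gives $N_-=hrr'$ directly. Contrary to your expectation, this step is short once Proposition~\ref{prop:monom} is established; the real work sits in proving that proposition (the factorization of the tropical Y-system into rank-one pieces), not in the dilogarithm sum itself.
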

The identities
\eqref{eq:DI4} and \eqref{eq:DI5}  are equivalent to each other by
Lemma \ref{lem:y}.
The identities
\eqref{eq:DI6} and \eqref{eq:DI7}
follow from \eqref{eq:DI4} and \eqref{eq:DI5}
by the half periodicity in Theorem \ref{thm:period2}.

We are going to prove Theorem 
\ref{thm:DI3}.

\section{Tropical Y-system}
Let us have an interlude to establish a property
of the {\em tropical Y-system} \cite{FZ4} associated with
the cluster algebra $\mathcal{A}(B,x,y)$
for $B=B(X_r,X'_{r'})$.

Let $B=(B_{ij})_{i,j\in I}$ be a general skew-symmetrizable matrix.
Let $y$ be the initial coefficient tuple
of the cluster algebra $\mathcal{A}(B,x,y)$
with coefficients in the universal
semifield $\mathbb{Q}_{\mathrm{sf}}(y)$.
The {\em tropical semifield} $\mathrm{Trop}(y)$
is an abelian multiplicative group freely generated by
the elements $y_{i}$ ($i\in I$)
with the addition $\oplus$ 
\begin{align}
\prod_{i\in I}y_{i}^{a_{i}}
\oplus
\prod_{i\in I}y_{i}^{b_{i}}
=
\prod_{i\in I}y_{i}^{\min(a_i,b_i)}.
\end{align}
The image of 
$f\in \mathbb{Q}_{\mathrm{sf}}(y)$
by the natural projection $\mathbb{Q}_{\mathrm{sf}}(y)
\rightarrow \mathrm{Trop}(y)$ is denoted by
$[f]_{\mathrm{T}}$ and called the {\em tropical evaluation
of $f$} \cite{FZ4}.
%In plain words, it is the `leading monomial' of $f$
%in $y$.

We say a (Laurent) monomial in $y=(y_i)_{\i\in I}$ is {\em positive}
if its exponents are all nonnegative and at least
one of them is positive.
A {\em negative\/} monomial is defined similarly.

\begin{lemma}
\label{lem:mutation}
Suppose that $y''$ is the coefficient tuple obtained from
the mutation of another coefficient tuple $y'$ at $k$.
Then, for any $i\neq k$,
$[y''_i]_{\mathrm{T}} = [y'_i]_{\mathrm{T}}$
 if one of the following conditions holds.
\par
(i) $B_{ki}=0$.
\par
(ii) $B_{ki}> 0$, and $[y'_k]_{\mathrm{T}}$ is negative.
\par
(iii) $B_{ki}< 0$, and $[y'_k]_{\mathrm{T}}$ is positive.
\end{lemma}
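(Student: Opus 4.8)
\textbf{Proof proposal for Lemma \ref{lem:mutation}.}

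The plan is to read off everything directly from the exchange relation \eqref{eq:coef} for the coefficient tuple, specialized to the tropical semifield $\mathrm{Trop}(y)$, where $\oplus$ becomes componentwise minimum of exponent vectors. First I would record the key simplification that in $\mathrm{Trop}(y)$ one has $1\oplus f = [f]_{\mathrm{T}}\oplus 1$, so that if $[f]_{\mathrm{T}}$ is a positive monomial then $1\oplus [f]_{\mathrm{T}} = 1$ (all exponents of $f$ are $\geq 0$, so the min with the zero exponent vector is $0$), whereas if $[f]_{\mathrm{T}}$ is a negative monomial then $1\oplus [f]_{\mathrm{T}} = [f]_{\mathrm{T}}$ (all exponents $\leq 0$). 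This is the only nontrivial arithmetic fact needed, and it is immediate from the definition of $\oplus$ on $\mathrm{Trop}(y)$.

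Next I would apply the tropical projection to the $i\neq k$ cases of \eqref{eq:coef}. Case (i), $B_{ki}=0$: both branches of \eqref{eq:coef} give $y''_i = y'_i\cdot(\text{something})^{B_{ki}} = y'_i$ since the exponent is $0$, so $[y''_i]_{\mathrm{T}}=[y'_i]_{\mathrm{T}}$ with no hypothesis on $[y'_k]_{\mathrm{T}}$ at all. Case (ii), $B_{ki}>0$ and $[y'_k]_{\mathrm{T}}$ negative: the relevant branch is $y''_i = y'_i\bigl(y'_k/(1\oplus y'_k)\bigr)^{B_{ki}}$; applying $[\,\cdot\,]_{\mathrm{T}}$ and using $1\oplus[y'_k]_{\mathrm{T}} = [y'_k]_{\mathrm{T}}$ (since it is negative), the factor $[y'_k]_{\mathrm{T}}/(1\oplus[y'_k]_{\mathrm{T}})$ tropicalizes to $1$, hence $[y''_i]_{\mathrm{T}}=[y'_i]_{\mathrm{T}}$. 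Case (iii), $B_{ki}<0$ and $[y'_k]_{\mathrm{T}}$ positive: the relevant branch is $y''_i = y'_i(1\oplus y'_k)^{-B_{ki}}$; since $[y'_k]_{\mathrm{T}}$ is positive, $1\oplus[y'_k]_{\mathrm{T}} = 1$, so again the extra factor is trivial and $[y''_i]_{\mathrm{T}}=[y'_i]_{\mathrm{T}}$.

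One small point of care: in \eqref{eq:coef} the branches overlap when $B_{ki}=0$ (it satisfies both $B_{ki}\geq 0$ and $B_{ki}\leq 0$), but since the two formulas agree there, this is harmless; I would simply note that in cases (ii) and (iii) the strict inequality selects the unambiguous branch. There is no real obstacle here — the statement is a direct unwinding of definitions — but the one thing worth stating cleanly is the behavior of $1\oplus(\cdot)$ on positive versus negative tropical monomials, since that is the hinge of all three cases and explains why the sign of $B_{ki}$ must match the sign of $[y'_k]_{\mathrm{T}}$ in the way the lemma prescribes.
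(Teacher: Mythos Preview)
Your proposal is correct and is exactly the unwinding of the paper's one-line proof (``This is an immediate consequence of the exchange relation \eqref{eq:coef}''): the key observation that $1\oplus m$ in $\mathrm{Trop}(y)$ equals $1$ for a positive monomial $m$ and $m$ for a negative one is precisely what makes the extra factor in \eqref{eq:coef} trivial under the stated sign hypotheses. There is nothing to add.
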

\begin{proof}
This is an immediate consequence of the exchange relation
\eqref{eq:coef}.
\end{proof}

Now we claim a key proposition in our proof of Theorem
\ref{thm:DI3}.

\begin{proposition}
\label{prop:monom}
For the cluster algebra
$\mathcal{A}(B,x,y)$ for $B=B(X_r,X'_{r'})$,
the following properties hold.
\par
(i) The tropical evaluation $[ y_{\mathbf{i}}(u)]_{\mathrm{T}}$
of $y_{\mathbf{i}}(u)$
$(\mathbf{i}\in \mathbf{I}, u\in \mathbb{Z})$ is a 
positive or negative  monomial in $y=y(0)$.

(ii) For $0\leq u \leq h'-1$ and $(\mathbf{i},u):\mathbf{P}_+$,
$[ y_{\mathbf{i}}(u)]_{\mathrm{T}}$ is a positive monomial.
For $-h\leq u \leq -1$ and $(\mathbf{i},u):\mathbf{P}_+$,
$[ y_{\mathbf{i}}(u)]_{\mathrm{T}}$ is a negative monomial.

\par
(iii) Let $N_+$ (\/resp.\ $N_-$) be the number of the positive
(resp.\ negative)
monomials
$[y_{\mathbf{i}}(u)]_{\mathrm{T}}$
$((\mathbf{i},u)\in S_+)$,
where $S_+$ is the domain in Theorem \ref{thm:DI3}.
Then,
\begin{align}
\label{eq:number}
N_+=h'rr',
\quad
N_-=hrr'.
\end{align}
\end{proposition}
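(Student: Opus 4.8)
\textbf{Proposal for the proof of Proposition \ref{prop:monom}.}

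The plan is to exploit the structure of the mutation sequence \eqref{eq:QseqADE2}, in which the composed mutations $\mu_+$ and $\mu_-$ act on the matrix $B$ only by a global sign flip (equation \eqref{eq:B2}), and track tropical signs via Lemma \ref{lem:mutation}. First I would establish part (ii) directly: the key point is that for $(\mathbf{i},u):\mathbf{P}_+$ with $0\leq u\leq h'-1$, the monomial $[y_{\mathbf{i}}(u)]_{\mathrm{T}}$ is positive. I would prove this by induction on $u$. For $u=0$ the relevant $y_{\mathbf{i}}(0)=y_{\mathbf{i}}$ is a single generator, hence positive. For the inductive step, I would use Lemma \ref{lem:mutation}: when we apply $\mu_+$ (resp.\ $\mu_-$) to pass from $y(u)$ to $y(u+1)$, the components indexed by $\mathbf{j}\in\mathbf{I}_-$ (resp.\ $\mathbf{I}_+$) are unchanged, and for the components that do mutate, the new tropical monomial is governed by the signs of the $[y_{\mathbf{k}}(u)]_{\mathrm{T}}$ for $\mathbf{k}$ mutated earlier in the composed mutation with $B_{\mathbf{k}\mathbf{i}}\neq 0$. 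Since within a single composed mutation all mutated vertices lie in one part $\mathbf{I}_\varepsilon$ and there are no arrows within that part (Lemma, \eqref{eq:B1}), the order within $\mu_\pm$ is irrelevant and each mutated $y_{\mathbf{i}}(u+1)$ is obtained from $y_{\mathbf{i}}(u)$ multiplied by factors $(y_{\mathbf{k}}(u)/(1+y_{\mathbf{k}}(u)))^{B_{\mathbf{k}\mathbf{i}}}$ or $(1+y_{\mathbf{k}}(u))^{-B_{\mathbf{k}\mathbf{i}}}$; tropically $[1+f]_{\mathrm{T}}=1$ when $[f]_{\mathrm{T}}$ is positive, so the positivity is preserved as long as no negative monomial has yet appeared among the relevant neighbours. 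The range $0\leq u\leq h'-1$ is exactly the window (before the `first mutation' at each vertex, in the language of \cite{FZ4, FZ3}) in which this holds; I would verify the boundary using the explicit combinatorics of the square product, or better, reduce to the known $(X_r,A_1)$ case via the second factor $X'_{r'}$ (the Y-system $\mathbb{Y}(X_r,X'_{r'})$ restricted appropriately). The negative-monomial statement for $-h\leq u\leq -1$ follows by the symmetry $u\mapsto -u$ combined with Lemma \ref{lem:y}(1), since $y_{\mathbf{i}}(u)$ and $y_{\mathbf{i}}(-u)$-type quantities are related by inversion, sending positive monomials to negative ones.

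Part (i) then follows from part (ii) together with the periodicity theorem (Theorem \ref{thm:period2}). Indeed, by Theorem \ref{thm:period2}(i) the coefficients are periodic in $u$ with period $2(h+h')$, and by the half-periodicity (ii) of that theorem combined with the isomorphism $\EuScript{Y}^{\circ}_+\simeq\EuScript{Y}^{\circ}_-$ via $u\mapsto u+1$, every $y_{\mathbf{i}}(u)$ is, up to the action of Dynkin automorphisms and the inversion $y\mapsto y^{-1}$, one of the quantities covered by part (ii). Since Dynkin automorphisms permute the generators $y_{\mathbf{j}}$ and hence send positive/negative monomials to positive/negative monomials, and inversion swaps the two classes, the dichotomy propagates to all $(\mathbf{i},u)\in\mathbf{I}\times\mathbb{Z}$. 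Concretely, I would argue that the ``sign'' of $[y_{\mathbf{i}}(u)]_{\mathrm{T}}$ is $+$ exactly when $(\mathbf{i},u):\mathbf{P}_+$ and $u$ lies (mod $2(h+h')$, after the half-period folding) in $\{0,\dots,h'-1\}$, and $-$ when it lies in the complementary window of length $h$; this also sets up part (iii).

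For part (iii), the count $N_+ + N_- = |S_+|$ equals $h r r'$ since $S_+$ consists of all $(\mathbf{i},u)$ with $\mathbf{i}\in\mathbf{I}$, $(\mathbf{i},u):\mathbf{P}_+$, and $0\le u\le 2(h+h')-1$: for each fixed $\mathbf{i}$ exactly half the values of $u$ in a window of length $2(h+h')$ satisfy the parity condition, giving $(h+h')$ values, so $|S_+|=(h+h')rr'$; then I would split the window of length $2(h+h')$ into four stretches — two of length $h'$ contributing positive monomials and two of length $h$ contributing negative monomials — using the half-periodicity to identify the second half with the first. Counting the parity-$\mathbf{P}_+$ pairs in each stretch gives $N_+ = 2\cdot\frac{h'}{?}$-type bookkeeping; the clean statement is that among the $(h+h')$ relevant values of $u$ (for each $\mathbf{i}$), $h'$ of them give positive monomials and $h$ give negative ones, summed over the $rr'$ choices of $\mathbf{i}$, yielding $N_+=h'rr'$ and $N_-=hrr'$. (I should double-check that the parity condition interacts correctly with the half-period shift $u\mapsto u+h+h'$, using that $(-1)^{h+h'}$ may be $\pm1$ depending on the types; this is a finite case check.)

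\textbf{Main obstacle.} I expect the hardest part to be the boundary of the positivity window in part (ii): showing that $h'-1$ (and not more, and exactly up to there) is the correct cutoff, i.e.\ that $[y_{\mathbf{i}}(h'-1)]_{\mathrm{T}}$ is still positive for all $(\mathbf{i},h'-1):\mathbf{P}_+$ while negativity sets in immediately afterwards. This requires understanding the ``first mutation'' pattern of the square product $B(X_r,X'_{r'})$ precisely — essentially a tropical/piecewise-linear analogue of the periodicity computation — and is where I would lean most heavily on the quiver description in \cite[Section 8]{Kel1} and on reducing to the $A_1$-factor case where the cutoff is classical (\cite{FZ3,FZ4}). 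The rest is bookkeeping with Lemma \ref{lem:mutation}, Lemma \ref{lem:y}, and Theorem \ref{thm:period2}.
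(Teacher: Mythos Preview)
Your overall architecture --- prove (ii) first, then deduce (i) and (iii) from half-periodicity --- matches the paper exactly, and your count for (iii) is correct. But there is a genuine gap in your argument for (ii), and one step is wrong.

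The gap is in the induction. Saying ``$[1+f]_{\mathrm{T}}=1$ when $[f]_{\mathrm{T}}$ is positive, so positivity is preserved'' only tells you that \emph{unmutated} coordinates do not change tropically when their mutating neighbours are positive; it says nothing about why the pattern of positive monomials persists for exactly $h'$ steps and not fewer or more. You correctly flag the boundary as the hard part, but the mechanism you sketch (``explicit combinatorics of the square product'' or ``reduce to $(X_r,A_1)$'') is too vague to pin down $h'-1$. The paper's key observation is a \emph{factorization property} of the tropical Y-system: for $0\le u\le h'-1$, each \emph{column} $\{(i,i'): i'\in I'\}$ of the square product evolves exactly as the tropical Y-system of type $X'_{r'}$ alone (finite type, level~2), with \emph{no interaction in the horizontal direction}. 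This holds because at every mutation point $\mathbf{k}$ and every horizontally adjacent $\mathbf{i}$, condition (iii) of Lemma~\ref{lem:mutation} is satisfied (the relevant $[y_{\mathbf{k}}]_{\mathrm{T}}$ is positive and $B_{\mathbf{k}\mathbf{i}}<0$), so the horizontal arrows are tropically inert. Once decoupled, each column is governed by \cite[Propositions~9.3 and 10.7]{FZ4}, which give the positive-root description and hence the precise window $0\le u\le h'-1$. So the reduction is to columns of type $X'_{r'}$, not to $(X_r,A_1)$.

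The step that is actually wrong is your treatment of $-h\le u\le -1$ via ``the symmetry $u\mapsto -u$ combined with Lemma~\ref{lem:y}(1)''. There is no $u\mapsto -u$ symmetry; Lemma~\ref{lem:y}(1) only gives $y_{\mathbf{i}}(u)=y_{\mathbf{i}}(u\pm1)^{-1}$, a shift-by-one inversion between the two parities, not a reflection. The paper handles the negative range by running the mutation sequence in the \emph{reverse} direction of $u$ (so mutating at $(\mathbf{i},u):\mathbf{P}_-$), and observing the dual factorization: now each \emph{row} $\{(i,i'):i\in I\}$ decouples into an independent $X_r$ tropical Y-system, again because Lemma~\ref{lem:mutation}(iii) kills the vertical interactions. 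This gives negative monomials for exactly $h$ steps, with the window determined by the positive roots of $X_r$. Your parity worry about $(-1)^{h+h'}$ is a non-issue once you argue via the two factorized windows of lengths $h'$ and $h$ meeting at $u=0$.
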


The properties (i) and (iii) follow from (ii) by
the half periodicity in Theorem \ref{thm:period2}.
In the case $X'_{r'}=A_1$ or $X_r=A_1$,
i.e.,
the `level 2 case' or its level-rank dual in the original context,
Proposition \ref{prop:monom} reduces to the
known one for the {\em cluster algebra of finite type}
\cite[Proposition 10.7]{FZ4}.

Before giving a proof, it is instructive to
observe some examples.

\begin{example}
\label{example:A2}
 Let $X_r=A_1$, $I_+=\{1\}$, 
$I_-=\emptyset$,  and $X'_{r'}=A_2$,
$I'_+=\{1\}$, $I'_-=\{2\}$.
We have $h=2$ and $h'=3$.
We visualize the mutation matrix $B$ by a quiver
in the correspondence
\begin{align}
\mathbf{i} \rightarrow \mathbf{j}\quad
\Longleftrightarrow
\quad
B_{\mathbf{i}\mathbf{j}}=1.
\end{align}
Set $y_1:=y_{11}$ and $y_2:=y_{12}$.
Then, $[y_{\mathbf{i}}(u)]_{\mathrm{T}}$ for $0\leq u\leq 5$
is given as follows:
\begin{align*}
\begin{picture}(255,70)(-15,-10)
\put(0,20){\vector(0,1){20}}
\put(-15,2){\framebox(30,15)[c]{$y_1$}}
\put(-15,43){\makebox(30,15)[c]{$y_2$}}
\put(-8,-10){$y(0)$}
\put(15,25){$\leftrightarrow$}
\put(15,35){$\mu_+$}
\put(40,40){\vector(0,-1){20}}
\put(25,2){\makebox(30,15)[c]{$y_1^{-1}$}}
\put(25,43){\framebox(30,15)[c]{$y_1y_2$}}
\put(32,-10){$y(1)$}
\put(40,0)
{
\put(15,25){$\leftrightarrow$}
\put(15,35){$\mu_-$}
\put(40,20){\vector(0,1){20}}
\put(25,2){\framebox(30,15)[c]{$y_2$}}
\put(25,43){\makebox(30,15)[c]{$y_1^{-1}y_2^{-1}$}}
\put(32,-10){$y(2)$}
}
\put(80,0)
{
\put(15,25){$\leftrightarrow$}
\put(15,35){$\mu_+$}
\put(40,40){\vector(0,-1){20}}
\put(25,2){\makebox(30,15)[c]{$y_2^{-1}$}}
\put(25,43){\framebox(30,15)[c]{$y_1^{-1}$}}
\put(32,-10){$y(3)$}
}
\put(120,0)
{
\put(15,25){$\leftrightarrow$}
\put(15,35){$\mu_-$}
\put(40,20){\vector(0,1){20}}
\put(25,2){\framebox(30,15)[c]{$y_2^{-1}$}}
\put(25,43){\makebox(30,15)[c]{$y_1$}}
\put(32,-10){$y(4)$}
}
\put(160,0)
{
\put(15,25){$\leftrightarrow$}
\put(15,35){$\mu_+$}
\put(40,40){\vector(0,-1){20}}
\put(25,2){\makebox(30,15)[c]{$y_2$}}
\put(25,43){\makebox(30,15)[c]{$y_1$}}
\put(32,-10){$y(5)$}
}
\end{picture}
\end{align*}
Here, the framed variables are all the elements
in the domain $H_+$.
Certainly, we have $N_+/2=3$ and $N_-/2=2$,
which agree with \eqref{eq:number}.
Moreover, we observe that the positive monomials
occur consecutively for $0\leq u\leq 2$.
This is a consequence of \cite[Proposition 10.7]{FZ4}.
In fact, they correspond to the positive roots
$\alpha_1$, $\alpha_1+\alpha_2$, $\alpha_2$
of $A_2$.
For the later use we abbreviate the above diagram
as follows:
\begin{align*}
\begin{picture}(255,80)(-15,-15)
\put(0,20){\vector(0,1){20}}
\put(-5, -3){\framebox(10,20)[c]}
\put(-2,9){$\scriptstyle 0$}
\put(-2,1){$\scriptstyle 1$}
\put(-2,55){$\scriptstyle 1$}
\put(-2,47){$\scriptstyle 0$}
\put(-8,-15){$y(0)$}
\put(15,25){$\leftrightarrow$}
\put(15,35){$\mu_+$}
\put(40,40){\vector(0,-1){20}}
\put(38,9){$\scriptstyle 0$}
\put(36,1){-$\scriptstyle 1$}
\put(35, 43){\framebox(10,20)[c]}
\put(38,55){$\scriptstyle 1$}
\put(38,47){$\scriptstyle 1$}
\put(32,-15){$y(1)$}
\put(40,0)
{
\put(15,25){$\leftrightarrow$}
\put(15,35){$\mu_-$}
\put(40,20){\vector(0,1){20}}
\put(35, -3){\framebox(10,20)[c]}
\put(38,9){$\scriptstyle 1$}
\put(38,1){$\scriptstyle 0$}
\put(36,55){-$\scriptstyle 1$}
\put(36,47){-$\scriptstyle 1$}
\put(32,-15){$y(2)$}
}
\put(80,0)
{
\put(15,25){$\leftrightarrow$}
\put(15,35){$\mu_+$}
\put(40,40){\vector(0,-1){20}}
\put(36,9){-$\scriptstyle 1$}
\put(38,1){$\scriptstyle 0$}
\put(35, 43){\framebox(10,20)[c]}
\put(38,55){$\scriptstyle  0$}
\put(36,47){-$\scriptstyle 1$}
\put(32,-15){$y(3)$}
}
\put(120,0)
{
\put(15,25){$\leftrightarrow$}
\put(15,35){$\mu_-$}
\put(40,20){\vector(0,1){20}}
\put(35, -3){\framebox(10,20)[c]}
\put(36,9){-$\scriptstyle 1$}
\put(38,1){$\scriptstyle 0$}
\put(38,55){$\scriptstyle 0$}
\put(38,47){$\scriptstyle 1$}
\put(32,-15){$y(4)$}
}
\put(160,0)
{
\put(15,25){$\leftrightarrow$}
\put(15,35){$\mu_+$}
\put(40,40){\vector(0,-1){20}}
\put(35, 43){\makebox(10,20)[c]}
\put(38,9){$\scriptstyle 1$}
\put(38,1){$\scriptstyle 0$}
\put(38,55){$\scriptstyle 0$}
\put(38,47){$\scriptstyle 1$}
\put(32,-15){$y(5)$}
}
\end{picture}
\end{align*}
\end{example}

\begin{example}
\label{example:A3}
 Let $X_r=A_3$, $I_+=\{1,3\}$, 
$I_-=\{2\}$,  and $X'_{r'}=A_1$,
$I'_+=\{1\}$, $I'_-=\emptyset$.
We have $h=4$ and $h'=2$.
We consider mutations 
for $-6\leq u\leq 0$ by moving in the reverse direction of $u$.
The result is abbreviated in the diagram:

\begin{align*}
\begin{picture}(255,102)(-15,18)
\put(0,120)
{
\put(-3, -5){\makebox(26,10)[c]}
\put(0,-2){$\scriptstyle 0$}
\put(8,-2){$\scriptstyle 0$}
\put(16,-2){$\scriptstyle  1$}
\put(35,0){\vector(-1,0){10}}
\put(40,-2){$\scriptstyle 0$}
\put(48,-2){$\scriptstyle 1$}
\put(56,-2){$\scriptstyle 0$}
\put(65,0){\vector(1,0){10}}
\put(77, -5){\makebox(26,10)[c]}
\put(80,-2){$\scriptstyle 1$}
\put(88,-2){$\scriptstyle 0$}
\put(96,-2){$\scriptstyle 0$}
\put(40,-15){$y(-6)$}
}
\put(150,120)
{
\put(37, -5){\framebox(26,10)[c]}
\put(0,-2){$\scriptstyle 0$}
\put(8,-2){$\scriptstyle 0$}
\put(14,-2){-$\scriptstyle  1$}
\put(25,0){\vector(1,0){10}}
\put(40,-2){$\scriptstyle 0$}
\put(48,-2){$\scriptstyle 1$}
\put(56,-2){$\scriptstyle 0$}
\put(75,0){\vector(-1,0){10}}
\put(78,-2){-$\scriptstyle 1$}
\put(88,-2){$\scriptstyle 0$}
\put(96,-2){$\scriptstyle 0$}
\put(40,-15){$y(-5)$}
}
\put(120,115){$\leftrightarrow$}
\put(120,125){$\mu_+$}
\put(0,90)
{
\put(-3, -5){\framebox(26,10)[c]}
\put(0,-2){$\scriptstyle 0$}
\put(8,-2){$\scriptstyle 0$}
\put(14,-2){-$\scriptstyle  1$}
\put(35,0){\vector(-1,0){10}}
\put(40,-2){$\scriptstyle 0$}
\put(46,-2){-$\scriptstyle 1$}
\put(56,-2){$\scriptstyle 0$}
\put(65,0){\vector(1,0){10}}
\put(77, -5){\framebox(26,10)[c]}
\put(78,-2){-$\scriptstyle 1$}
\put(88,-2){$\scriptstyle 0$}
\put(96,-2){$\scriptstyle 0$}
\put(40,-15){$y(-4)$}
}
\put(-30,85){$\leftrightarrow$}
\put(-30,95){$\mu_-$}
\put(150,90)
{
\put(37, -5){\framebox(26,10)[c]}
\put(0,-2){$\scriptstyle 0$}
\put(8,-2){$\scriptstyle 0$}
\put(16,-2){$\scriptstyle  1$}
\put(25,0){\vector(1,0){10}}
\put(38,-2){-$\scriptstyle 1$}
\put(46,-2){-$\scriptstyle 1$}
\put(54,-2){-$\scriptstyle 1$}
\put(75,0){\vector(-1,0){10}}
\put(80,-2){$\scriptstyle 1$}
\put(88,-2){$\scriptstyle 0$}
\put(96,-2){$\scriptstyle 0$}
\put(40,-15){$y(-3)$}
}
\put(120,85){$\leftrightarrow$}
\put(120,95){$\mu_+$}
\put(0,60)
{
\put(-3, -5){\framebox(26,10)[c]}
\put(-2,-2){-$\scriptstyle 1$}
\put(6,-2){-$\scriptstyle 1$}
\put(16,-2){$\scriptstyle  0$}
\put(35,0){\vector(-1,0){10}}
\put(40,-2){$\scriptstyle 1$}
\put(48,-2){$\scriptstyle 1$}
\put(56,-2){$\scriptstyle 1$}
\put(65,0){\vector(1,0){10}}
\put(77, -5){\framebox(26,10)[c]}
\put(80,-2){$\scriptstyle 0$}
\put(86,-2){-$\scriptstyle 1$}
\put(94,-2){-$\scriptstyle 1$}
\put(40,-15){$y(-2)$}
}
\put(-30,55){$\leftrightarrow$}
\put(-30,65){$\mu_-$}
\put(150,60)
{
\put(37, -5){\framebox(26,10)[c]}
\put(0,-2){$\scriptstyle 1$}
\put(8,-2){$\scriptstyle 1$}
\put(16,-2){$\scriptstyle  0$}
\put(25,0){\vector(1,0){10}}
\put(40,-2){$\scriptstyle 0$}
\put(46,-2){-$\scriptstyle 1$}
\put(56,-2){$\scriptstyle 0$}
\put(75,0){\vector(-1,0){10}}
\put(80,-2){$\scriptstyle 0$}
\put(88,-2){$\scriptstyle 1$}
\put(96,-2){$\scriptstyle 1$}
\put(40,-15){$y(-1)$}
}
\put(120,55){$\leftrightarrow$}
\put(120,65){$\mu_+$}
\put(0,30)
{
\put(-3, -5){\framebox(26,10)[c]}
\put(0,-2){$\scriptstyle 1$}
\put(8,-2){$\scriptstyle 0$}
\put(16,-2){$\scriptstyle  0$}
\put(35,0){\vector(-1,0){10}}
\put(40,-2){$\scriptstyle 0$}
\put(48,-2){$\scriptstyle 1$}
\put(56,-2){$\scriptstyle 0$}
\put(65,0){\vector(1,0){10}}
\put(77, -5){\framebox(26,10)[c]}
\put(80,-2){$\scriptstyle 0$}
\put(88,-2){$\scriptstyle 0$}
\put(96,-2){$\scriptstyle 1$}
\put(40,-15){$y(0)$}
}
\put(-30,25){$\leftrightarrow$}
\put(-30,35){$\mu_-$}
\end{picture}
\end{align*}
Here, $110$, for example, represents the monomial $y_{11}y_{21}$.
The framed variables correspond to all the elements
in the domain $H_+$ modulo
half period $h+h'=6$.
Certainly, we have $N_+/2=3$ and $N_-/2=6$,
which agree with \eqref{eq:number}.
We observe that the negative monomials
occur consecutively for $-4\leq u\leq -1$.
Again, this is a consequence of \cite[Proposition 10.7]{FZ4},
and they correspond to the positive roots
of $A_3$.
\end{example}

Now we are ready to proceed to `higher level'.
The next example looks a toy example, but
completely clarifies why Proposition \ref{prop:monom} holds.

\begin{example}
\label{example:A2A3}
 Let $X_r=A_3$, $I_+=\{1,3\}$, 
$I_-=\{2\}$,  and $X'_{r'}=A_2$,
$I'_+=\{1\}$, $I'_-=\{2\}$.
We have $h=4$ and $h'=3$.
We consider mutations
for $-4\leq u\leq 3$
by moving in the both directions of $u$.
The result is shown in Figure \ref{fig:higher}.
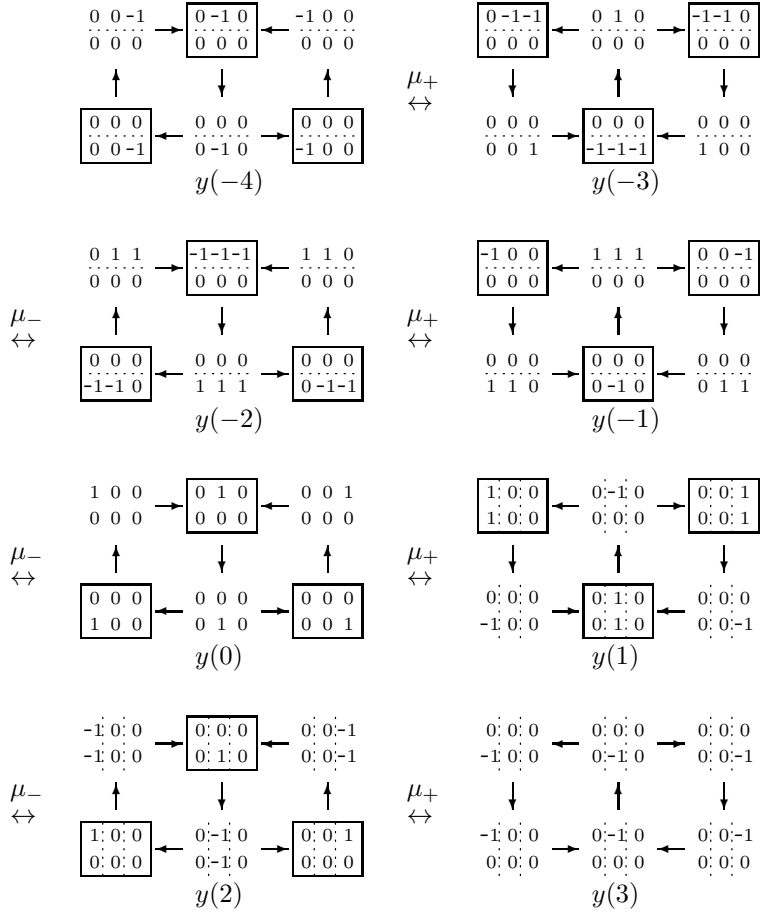
\begin{figure}[t]
\begin{picture}(255,345)(-15,-15)
\put(0,270)
{
%
%\put(-3, 35){\framebox(26,20)[c]}
\put(0,48){$\scriptstyle 0$}
\put(8,48){$\scriptstyle 0$}
\put(14,48){-$\scriptstyle  1$}
\put(0,38){$\scriptstyle 0$}
\put(8,38){$\scriptstyle 0$}
\put(16,38){$\scriptstyle  0$}
\dottedline{3}(0,45)(20,45)%
\put(10,20){\vector(0,1){10}}
\put(25,45){\vector(1,0){10}}
\put(37, 35){\framebox(26,20)[c]}
\put(40,48){$\scriptstyle 0$}
\put(46,48){-$\scriptstyle 1$}
\put(56,48){$\scriptstyle 0$}
\put(40,38){$\scriptstyle 0$}
\put(48,38){$\scriptstyle 0$}
\put(56,38){$\scriptstyle 0$}
%\dottedline{3}(45,35)(45,55)
%\dottedline{3}(53,35)(53,55)
\dottedline{3}(40,45)(60,45)
\put(50,30){\vector(0,-1){10}}
\put(75,45){\vector(-1,0){10}}
%
%\put(77, 35){\framebox(26,20)[c]}
\put(78,48){-$\scriptstyle 1$}
\put(88,48){$\scriptstyle 0$}
\put(96,48){$\scriptstyle 0$}
\put(80,38){$\scriptstyle 0$}
\put(88,38){$\scriptstyle 0$}
\put(96,38){$\scriptstyle 0$}
%\dottedline{3}(85,35)(85,55)
%\dottedline{3}(93,35)(93,55)
\dottedline{3}(80,45)(100,45)
\put(90,20){\vector(0,1){10}}
\put(-3, -5){\framebox(26,20)[c]}
\put(0,8){$\scriptstyle 0$}
\put(8,8){$\scriptstyle 0$}
\put(16,8){$\scriptstyle  0$}
\put(0,-2){$\scriptstyle 0$}
\put(8,-2){$\scriptstyle 0$}
\put(14,-2){-$\scriptstyle  1$}
%\dottedline{3}(5,-5)(5,15)
%\dottedline{3}(13,-5)(13,15)
\dottedline{3}(0,5)(20,5)
\put(35,5){\vector(-1,0){10}}
\put(40,8){$\scriptstyle 0$}
\put(48,8){$\scriptstyle 0$}
\put(56,8){$\scriptstyle 0$}
\put(40,-2){$\scriptstyle 0$}
\put(46,-2){-$\scriptstyle 1$}
\put(56,-2){$\scriptstyle 0$}
\dottedline{3}(40,5)(60,5)
\put(65,5){\vector(1,0){10}}
\put(77, -5){\framebox(26,20)[c]}
\put(80,8){$\scriptstyle 0$}
\put(88,8){$\scriptstyle 0$}
\put(96,8){$\scriptstyle 0$}
\put(78,-2){-$\scriptstyle 1$}
\put(88,-2){$\scriptstyle 0$}
\put(96,-2){$\scriptstyle 0$}
\dottedline{3}(80,5)(100,5)%
\put(40,-15){$y(-4)$}
%\put(-30,15){$\leftrightarrow$}
%\put(-30,25){$\mu_-$}
}
\put(150,270)
{
\put(-3, 35){\framebox(26,20)[c]}
\put(0,48){$\scriptstyle 0$}
\put(6,48){-$\scriptstyle 1$}
\put(14,48){-$\scriptstyle  1$}
\put(0,38){$\scriptstyle 0$}
\put(8,38){$\scriptstyle 0$}
\put(16,38){$\scriptstyle  0$}
\dottedline{3}(0,45)(20,45)%
\put(10,30){\vector(0,-1){10}}
\put(35,45){\vector(-1,0){10}}
\put(40,48){$\scriptstyle 0$}
\put(48,48){$\scriptstyle 1$}
\put(56,48){$\scriptstyle 0$}
\put(40,38){$\scriptstyle 0$}
\put(48,38){$\scriptstyle 0$}
\put(56,38){$\scriptstyle 0$}
\dottedline{3}(40,45)(60,45)
\put(50,20){\vector(0,1){10}}
\put(65,45){\vector(1,0){10}}
\put(77, 35){\framebox(26,20)[c]}
\put(78,48){-$\scriptstyle 1$}
\put(86,48){-$\scriptstyle 1$}
\put(96,48){$\scriptstyle 0$}
\put(80,38){$\scriptstyle 0$}
\put(88,38){$\scriptstyle 0$}
\put(96,38){$\scriptstyle 0$}
\dottedline{3}(80,45)(100,45)
\put(90,30){\vector(0,-1){10}}
%
%\put(-3, -5){\framebox(26,20)[c]}
\put(0,8){$\scriptstyle 0$}
\put(8,8){$\scriptstyle 0$}
\put(16,8){$\scriptstyle  0$}
\put(0,-2){$\scriptstyle 0$}
\put(8,-2){$\scriptstyle 0$}
\put(16,-2){$\scriptstyle  1$}
\dottedline{3}(0,5)(20,5)
\put(25,5){\vector(1,0){10}}
\put(37, -5){\framebox(26,20)[c]}
\put(40,8){$\scriptstyle 0$}
\put(48,8){$\scriptstyle 0$}
\put(56,8){$\scriptstyle 0$}
\put(38,-2){-$\scriptstyle 1$}
\put(46,-2){-$\scriptstyle 1$}
\put(54,-2){-$\scriptstyle 1$}
\dottedline{3}(40,5)(60,5)
\put(75,5){\vector(-1,0){10}}
%
%\put(77, -5){\framebox(26,20)[c]}
\put(80,8){$\scriptstyle 0$}
\put(88,8){$\scriptstyle 0$}
\put(96,8){$\scriptstyle 0$}
\put(80,-2){$\scriptstyle 1$}
\put(88,-2){$\scriptstyle 0$}
\put(96,-2){$\scriptstyle 0$}
\dottedline{3}(80,5)(100,5)
\put(40,-15){$y(-3)$}
\put(-30,15){$\leftrightarrow$}
\put(-30,25){$\mu_+$}
}
\put(0,180)
{
%
%\put(-3, 35){\framebox(26,20)[c]}
\put(0,48){$\scriptstyle 0$}
\put(8,48){$\scriptstyle 1$}
\put(16,48){$\scriptstyle  1$}
\put(0,38){$\scriptstyle 0$}
\put(8,38){$\scriptstyle 0$}
\put(16,38){$\scriptstyle  0$}
\dottedline{3}(0,45)(20,45)%
%\dottedline{3}(5,35)(5,55)
%\dottedline{3}(13,35)(13,55)
%
\put(10,20){\vector(0,1){10}}
\put(25,45){\vector(1,0){10}}
\put(37, 35){\framebox(26,20)[c]}
\put(38,48){-$\scriptstyle 1$}
\put(46,48){-$\scriptstyle 1$}
\put(54,48){-$\scriptstyle 1$}
\put(40,38){$\scriptstyle 0$}
\put(48,38){$\scriptstyle 0$}
\put(56,38){$\scriptstyle 0$}
%\dottedline{3}(45,35)(45,55)
%\dottedline{3}(53,35)(53,55)
\dottedline{3}(40,45)(60,45)
\put(50,30){\vector(0,-1){10}}
\put(75,45){\vector(-1,0){10}}
%
%\put(77, 35){\framebox(26,20)[c]}
\put(80,48){$\scriptstyle 1$}
\put(88,48){$\scriptstyle 1$}
\put(96,48){$\scriptstyle 0$}
\put(80,38){$\scriptstyle 0$}
\put(88,38){$\scriptstyle 0$}
\put(96,38){$\scriptstyle 0$}
\dottedline{3}(80,45)(100,45)
\put(90,20){\vector(0,1){10}}
\put(-3, -5){\framebox(26,20)[c]}
\put(0,8){$\scriptstyle 0$}
\put(8,8){$\scriptstyle 0$}
\put(16,8){$\scriptstyle  0$}
\put(-2,-2){-$\scriptstyle 1$}
\put(6,-2){-$\scriptstyle 1$}
\put(16,-2){$\scriptstyle  0$}
\dottedline{3}(0,5)(20,5)
\put(35,5){\vector(-1,0){10}}
\put(40,8){$\scriptstyle 0$}
\put(48,8){$\scriptstyle 0$}
\put(56,8){$\scriptstyle 0$}
\put(40,-2){$\scriptstyle 1$}
\put(48,-2){$\scriptstyle 1$}
\put(56,-2){$\scriptstyle 1$}
\dottedline{3}(40,5)(60,5)
\put(65,5){\vector(1,0){10}}
\put(77, -5){\framebox(26,20)[c]}
\put(80,8){$\scriptstyle 0$}
\put(88,8){$\scriptstyle 0$}
\put(96,8){$\scriptstyle 0$}
\put(80,-2){$\scriptstyle 0$}
\put(86,-2){-$\scriptstyle 1$}
\put(94,-2){-$\scriptstyle 1$}
\dottedline{3}(80,5)(100,5)%
\put(40,-15){$y(-2)$}
\put(-30,15){$\leftrightarrow$}
\put(-30,25){$\mu_-$}
}
\put(150,180)
{
\put(-3, 35){\framebox(26,20)[c]}
\put(-2,48){-$\scriptstyle 1$}
\put(8,48){$\scriptstyle 0$}
\put(16,48){$\scriptstyle  0$}
\put(0,38){$\scriptstyle 0$}
\put(8,38){$\scriptstyle 0$}
\put(16,38){$\scriptstyle  0$}
\dottedline{3}(0,45)(20,45)%
\put(10,30){\vector(0,-1){10}}
\put(35,45){\vector(-1,0){10}}
\put(40,48){$\scriptstyle 1$}
\put(48,48){$\scriptstyle 1$}
\put(56,48){$\scriptstyle 1$}
\put(40,38){$\scriptstyle 0$}
\put(48,38){$\scriptstyle 0$}
\put(56,38){$\scriptstyle 0$}
\dottedline{3}(40,45)(60,45)
\put(50,20){\vector(0,1){10}}
\put(65,45){\vector(1,0){10}}
\put(77, 35){\framebox(26,20)[c]}
\put(80,48){$\scriptstyle 0$}
\put(88,48){$\scriptstyle 0$}
\put(94,48){-$\scriptstyle 1$}
\put(80,38){$\scriptstyle 0$}
\put(88,38){$\scriptstyle 0$}
\put(96,38){$\scriptstyle 0$}
\dottedline{3}(80,45)(100,45)
\put(90,30){\vector(0,-1){10}}
%
%\put(-3, -5){\framebox(26,20)[c]}
\put(0,8){$\scriptstyle 0$}
\put(8,8){$\scriptstyle 0$}
\put(16,8){$\scriptstyle  0$}
\put(0,-2){$\scriptstyle 1$}
\put(8,-2){$\scriptstyle 1$}
\put(16,-2){$\scriptstyle  0$}
\dottedline{3}(0,5)(20,5)
\put(25,5){\vector(1,0){10}}
\put(37, -5){\framebox(26,20)[c]}
\put(40,8){$\scriptstyle 0$}
\put(48,8){$\scriptstyle 0$}
\put(56,8){$\scriptstyle 0$}
\put(40,-2){$\scriptstyle 0$}
\put(46,-2){-$\scriptstyle 1$}
\put(56,-2){$\scriptstyle 0$}
\dottedline{3}(40,5)(60,5)
\put(75,5){\vector(-1,0){10}}
%
%\put(77, -5){\framebox(26,20)[c]}
\put(80,8){$\scriptstyle 0$}
\put(88,8){$\scriptstyle 0$}
\put(96,8){$\scriptstyle 0$}
\put(80,-2){$\scriptstyle 0$}
\put(88,-2){$\scriptstyle 1$}
\put(96,-2){$\scriptstyle 1$}
\dottedline{3}(80,5)(100,5)
\put(40,-15){$y(-1)$}
\put(-30,15){$\leftrightarrow$}
\put(-30,25){$\mu_+$}
}
\put(0,90)
{
%
%\put(-3, 35){\framebox(26,20)[c]}
\put(0,48){$\scriptstyle 1$}
\put(8,48){$\scriptstyle 0$}
\put(16,48){$\scriptstyle  0$}
\put(0,38){$\scriptstyle 0$}
\put(8,38){$\scriptstyle 0$}
\put(16,38){$\scriptstyle  0$}
%\dottedline{3}(5,35)(5,55)
%\dottedline{3}(13,35)(13,55)
%
\put(10,20){\vector(0,1){10}}
\put(25,45){\vector(1,0){10}}
\put(37, 35){\framebox(26,20)[c]}
\put(40,48){$\scriptstyle 0$}
\put(48,48){$\scriptstyle 1$}
\put(56,48){$\scriptstyle 0$}
\put(40,38){$\scriptstyle 0$}
\put(48,38){$\scriptstyle 0$}
\put(56,38){$\scriptstyle 0$}
%\dottedline{3}(45,35)(45,55)
%\dottedline{3}(53,35)(53,55)
%
\put(50,30){\vector(0,-1){10}}
\put(75,45){\vector(-1,0){10}}
%
%\put(77, 35){\framebox(26,20)[c]}
\put(80,48){$\scriptstyle 0$}
\put(88,48){$\scriptstyle 0$}
\put(96,48){$\scriptstyle 1$}
\put(80,38){$\scriptstyle 0$}
\put(88,38){$\scriptstyle 0$}
\put(96,38){$\scriptstyle 0$}
%\dottedline{3}(85,35)(85,55)
%\dottedline{3}(93,35)(93,55)
%
\put(90,20){\vector(0,1){10}}
\put(-3, -5){\framebox(26,20)[c]}
\put(0,8){$\scriptstyle 0$}
\put(8,8){$\scriptstyle 0$}
\put(16,8){$\scriptstyle  0$}
\put(0,-2){$\scriptstyle 1$}
\put(8,-2){$\scriptstyle 0$}
\put(16,-2){$\scriptstyle  0$}
%\dottedline{3}(5,-5)(5,15)
%\dottedline{3}(13,-5)(13,15)
%
\put(35,5){\vector(-1,0){10}}
\put(40,8){$\scriptstyle 0$}
\put(48,8){$\scriptstyle 0$}
\put(56,8){$\scriptstyle 0$}
\put(40,-2){$\scriptstyle 0$}
\put(48,-2){$\scriptstyle 1$}
\put(56,-2){$\scriptstyle 0$}
%\dottedline{3}(45,-5)(45,15)
%\dottedline{3}(53,-5)(53,15)
%
\put(65,5){\vector(1,0){10}}
\put(77, -5){\framebox(26,20)[c]}
\put(80,8){$\scriptstyle 0$}
\put(88,8){$\scriptstyle 0$}
\put(96,8){$\scriptstyle 0$}
\put(80,-2){$\scriptstyle 0$}
\put(88,-2){$\scriptstyle 0$}
\put(96,-2){$\scriptstyle 1$}
%\dottedline{3}(85,-5)(85,15)
%\dottedline{3}(93,-5)(93,15)
%
\put(40,-15){$y(0)$}
\put(-30,15){$\leftrightarrow$}
\put(-30,25){$\mu_-$}
}
\put(150,90)
{
\put(-3, 35){\framebox(26,20)[c]}
\put(0,48){$\scriptstyle 1$}
\put(8,48){$\scriptstyle 0$}
\put(16,48){$\scriptstyle  0$}
\put(0,38){$\scriptstyle 1$}
\put(8,38){$\scriptstyle 0$}
\put(16,38){$\scriptstyle  0$}
\dottedline{3}(5,35)(5,55)
\dottedline{3}(13,35)(13,55)
%\dottedline{3}(0,45)(20,45)
%
\put(10,30){\vector(0,-1){10}}
\put(35,45){\vector(-1,0){10}}
\put(40,48){$\scriptstyle 0$}
\put(46,48){-$\scriptstyle 1$}
\put(56,48){$\scriptstyle 0$}
\put(40,38){$\scriptstyle 0$}
\put(48,38){$\scriptstyle 0$}
\put(56,38){$\scriptstyle 0$}
\dottedline{3}(45,35)(45,55)
\dottedline{3}(53,35)(53,55)
%\dottedline{3}(40,45)(60,45)
%
\put(50,20){\vector(0,1){10}}
\put(65,45){\vector(1,0){10}}
\put(77, 35){\framebox(26,20)[c]}
\put(80,48){$\scriptstyle 0$}
\put(88,48){$\scriptstyle 0$}
\put(96,48){$\scriptstyle 1$}
\put(80,38){$\scriptstyle 0$}
\put(88,38){$\scriptstyle 0$}
\put(96,38){$\scriptstyle 1$}
\dottedline{3}(85,35)(85,55)
\dottedline{3}(93,35)(93,55)
%\dottedline{3}(80,45)(100,45)
%
\put(90,30){\vector(0,-1){10}}
%
%\put(-3, -5){\framebox(26,20)[c]}
\put(0,8){$\scriptstyle 0$}
\put(8,8){$\scriptstyle 0$}
\put(16,8){$\scriptstyle  0$}
\put(-2,-2){-$\scriptstyle 1$}
\put(8,-2){$\scriptstyle 0$}
\put(16,-2){$\scriptstyle  0$}
\dottedline{3}(5,-5)(5,15)
\dottedline{3}(13,-5)(13,15)
%\dottedline{3}(0,5)(20,5)
%
\put(25,5){\vector(1,0){10}}
\put(37, -5){\framebox(26,20)[c]}
\put(40,8){$\scriptstyle 0$}
\put(48,8){$\scriptstyle 1$}
\put(56,8){$\scriptstyle 0$}
\put(40,-2){$\scriptstyle 0$}
\put(48,-2){$\scriptstyle 1$}
\put(56,-2){$\scriptstyle 0$}
\dottedline{3}(45,-5)(45,15)
\dottedline{3}(53,-5)(53,15)
%\dottedline{3}(40,5)(60,5)
%
\put(75,5){\vector(-1,0){10}}
%
%\put(77, -5){\framebox(26,20)[c]}
\put(80,8){$\scriptstyle 0$}
\put(88,8){$\scriptstyle 0$}
\put(96,8){$\scriptstyle 0$}
\put(80,-2){$\scriptstyle 0$}
\put(88,-2){$\scriptstyle 0$}
\put(94,-2){-$\scriptstyle 1$}
\dottedline{3}(85,-5)(85,15)
\dottedline{3}(93,-5)(93,15)
%\dottedline{3}(80,5)(100,5)
%
\put(40,-15){$y(1)$}
\put(-30,15){$\leftrightarrow$}
\put(-30,25){$\mu_+$}
}
\put(0,0)
{
%
%\put(-3, 35){\framebox(26,20)[c]}
\put(-2,48){-$\scriptstyle 1$}
\put(8,48){$\scriptstyle 0$}
\put(16,48){$\scriptstyle  0$}
\put(-2,38){-$\scriptstyle 1$}
\put(8,38){$\scriptstyle 0$}
\put(16,38){$\scriptstyle  0$}
\dottedline{3}(5,35)(5,55)
\dottedline{3}(13,35)(13,55)
%\dottedline{3}(0,45)(20,45)
%
\put(10,20){\vector(0,1){10}}
\put(25,45){\vector(1,0){10}}
\put(37, 35){\framebox(26,20)[c]}
\put(40,48){$\scriptstyle 0$}
\put(48,48){$\scriptstyle 0$}
\put(56,48){$\scriptstyle 0$}
\put(40,38){$\scriptstyle 0$}
\put(48,38){$\scriptstyle 1$}
\put(56,38){$\scriptstyle 0$}
\dottedline{3}(45,35)(45,55)
\dottedline{3}(53,35)(53,55)
\put(50,30){\vector(0,-1){10}}
\put(75,45){\vector(-1,0){10}}
%
%\put(77, 35){\framebox(26,20)[c]}
\put(80,48){$\scriptstyle 0$}
\put(88,48){$\scriptstyle 0$}
\put(94,48){-$\scriptstyle 1$}
\put(80,38){$\scriptstyle 0$}
\put(88,38){$\scriptstyle 0$}
\put(94,38){-$\scriptstyle 1$}
\dottedline{3}(85,35)(85,55)
\dottedline{3}(93,35)(93,55)
\put(90,20){\vector(0,1){10}}
\put(-3, -5){\framebox(26,20)[c]}
\put(0,8){$\scriptstyle 1$}
\put(8,8){$\scriptstyle 0$}
\put(16,8){$\scriptstyle  0$}
\put(0,-2){$\scriptstyle 0$}
\put(8,-2){$\scriptstyle 0$}
\put(16,-2){$\scriptstyle  0$}
\dottedline{3}(5,-5)(5,15)
\dottedline{3}(13,-5)(13,15)
\put(35,5){\vector(-1,0){10}}
\put(40,8){$\scriptstyle 0$}
\put(46,8){-$\scriptstyle 1$}
\put(56,8){$\scriptstyle 0$}
\put(40,-2){$\scriptstyle 0$}
\put(46,-2){-$\scriptstyle 1$}
\put(56,-2){$\scriptstyle 0$}
\dottedline{3}(45,-5)(45,15)
\dottedline{3}(53,-5)(53,15)
\put(65,5){\vector(1,0){10}}
\put(77, -5){\framebox(26,20)[c]}
\put(80,8){$\scriptstyle 0$}
\put(88,8){$\scriptstyle 0$}
\put(96,8){$\scriptstyle 1$}
\put(80,-2){$\scriptstyle 0$}
\put(88,-2){$\scriptstyle 0$}
\put(96,-2){$\scriptstyle 0$}
\dottedline{3}(85,-5)(85,15)
\dottedline{3}(93,-5)(93,15)
\put(40,-15){$y(2)$}
\put(-30,15){$\leftrightarrow$}
\put(-30,25){$\mu_-$}
}
\put(150,0)
{
%
%\put(-3, 35){\framebox(26,20)[c]}
\put(0,48){$\scriptstyle 0$}
\put(8,48){$\scriptstyle 0$}
\put(16,48){$\scriptstyle  0$}
\put(-2,38){-$\scriptstyle 1$}
\put(8,38){$\scriptstyle 0$}
\put(16,38){$\scriptstyle  0$}
\dottedline{3}(5,35)(5,55)
\dottedline{3}(13,35)(13,55)
\put(10,30){\vector(0,-1){10}}
\put(35,45){\vector(-1,0){10}}
\put(40,48){$\scriptstyle 0$}
\put(48,48){$\scriptstyle 0$}
\put(56,48){$\scriptstyle 0$}
\put(40,38){$\scriptstyle 0$}
\put(46,38){-$\scriptstyle 1$}
\put(56,38){$\scriptstyle 0$}
\dottedline{3}(45,35)(45,55)
\dottedline{3}(53,35)(53,55)
\put(50,20){\vector(0,1){10}}
\put(65,45){\vector(1,0){10}}
%
%\put(77, 35){\framebox(26,20)[c]}
\put(80,48){$\scriptstyle 0$}
\put(88,48){$\scriptstyle 0$}
\put(96,48){$\scriptstyle 0$}
\put(80,38){$\scriptstyle 0$}
\put(88,38){$\scriptstyle 0$}
\put(94,38){-$\scriptstyle 1$}
\dottedline{3}(85,35)(85,55)
\dottedline{3}(93,35)(93,55)
\put(90,30){\vector(0,-1){10}}
%
%\put(-3, -5){\framebox(26,20)[c]}
\put(-2,8){-$\scriptstyle 1$}
\put(8,8){$\scriptstyle 0$}
\put(16,8){$\scriptstyle  0$}
\put(0,-2){$\scriptstyle 0$}
\put(8,-2){$\scriptstyle 0$}
\put(16,-2){$\scriptstyle  0$}
\dottedline{3}(5,-5)(5,15)
\dottedline{3}(13,-5)(13,15)
\put(25,5){\vector(1,0){10}}
%
%\put(37, -5){\framebox(26,20)[c]}
\put(40,8){$\scriptstyle 0$}
\put(46,8){-$\scriptstyle 1$}
\put(56,8){$\scriptstyle 0$}
\put(40,-2){$\scriptstyle 0$}
\put(48,-2){$\scriptstyle 0$}
\put(56,-2){$\scriptstyle 0$}
\dottedline{3}(45,-5)(45,15)
\dottedline{3}(53,-5)(53,15)
\put(75,5){\vector(-1,0){10}}
%
%\put(77, -5){\framebox(26,20)[c]}
\put(80,8){$\scriptstyle 0$}
\put(88,8){$\scriptstyle 0$}
\put(94,8){-$\scriptstyle 1$}
\put(80,-2){$\scriptstyle 0$}
\put(88,-2){$\scriptstyle 0$}
\put(96,-2){$\scriptstyle 0$}
\dottedline{3}(85,-5)(85,15)
\dottedline{3}(93,-5)(93,15)
\put(40,-15){$y(3)$}
\put(-30,15){$\leftrightarrow$}
\put(-30,25){$\mu_+$}
}
\end{picture}
\caption{Tropical Y-system for $X_r=A_3$ and $X'_{r'}=A_2$.}
\label{fig:higher}
\end{figure}
The framed variables corresponds to all the elements
in the domain $H_+$ modulo
half period $h+h'=7$,
and
\begin{align*}
\begin{matrix}
0&0&0\\
1&1&0\\
\end{matrix}
\end{align*}
for example, represents the monomial $y_{11}y_{12}$.
Certainly, we have $N_+/2=9$ and $N_-/2=12$,
which agree with \eqref{eq:number}.
We observe that the positive monomials
occur consecutively for $0\leq u\leq 2$,
while the negative monomials
do so for $-4\leq u\leq -1$.

Let us look at Figure \ref{fig:higher} more closely,
then we find a remarkable {\em factorization property\/}
of the tropical Y-system, which does not occur in the
nontropical Y-system.
First, see the region $0\leq u \leq 2$.
Then, the contents in the left and the right columns
for $y(u)$ mutate exactly in the same pattern as in Example
\ref{example:A2}. So does the middle column
with the other choice of bipartite decomposition of $I'$.
In particular, {\em there is no interaction in the horizontal direction}.
This is because the property (iii) in Lemma \ref{lem:mutation} is
satisfied at any mutation point $\mathbf{k}$
and any $\mathbf{i}$ horizontally adjacent to $\mathbf{k}$.
On the other hand, in the region $-4 \leq u \leq -1$,
the  contents in the lower row
mutate exactly in the same pattern as in Example
\ref{example:A3}. So does the upper row
with the other choice of bipartite decomposition of $I$.
Now, {\em there is no interaction in the vertical direction}.
Again, this is 
because the property (iii) in Lemma \ref{lem:mutation} is
satisfied at any mutation point $\mathbf{k}$
and any $\mathbf{i}$ vertically adjacent  to $\mathbf{k}$.
(Note that $(\mathbf{k},u):\mathbf{P}_-$ for mutations
in the reverse direction of $u$.)
\end{example}

\begin{proof}[Proof of Proposition \ref{prop:monom}]
It is enough to prove (ii).
We just repeat the argument in Example \ref{example:A2A3}
in a general manner.
Let us recall the definition of
the integer vector $\mathbf{d}(i,u)$
in \cite[Definition 10.2]{FZ4} (in our notation).
Let $s_1,\dots,s_r$ be the simple reflections of the Weyl
group of type $X_r$, and let
\begin{align}
t_+=\prod_{i\in I_+} s_i,
\quad
t_-=\prod_{i\in I_-} s_i.
\end{align}
Define the piecewise-linear analogue $\tau_{\pm}$
of $t_{\pm}$
acting on the set
$\Phi_{\ge -1}$ of all the positive roots and the negative
simple roots of $X_r$ by
\begin{align}
\tau_{\pm}(\alpha)=
\begin{cases}
-\alpha_i & \alpha=-\alpha_i, i\in I_{\mp}\\
t_{\pm} (\alpha)& \mbox{otherwise.}
\end{cases}
\end{align}
Then,  $\mathbf{d}(i,u)$  is defined by
\begin{align}
\mathbf{d}(i,u)=
\begin{cases}
(\tau_-\tau_+)^{u/2}(-\alpha_i)
& \mbox{$i\in I_+$ and even $u\geq 0$}\\
(\tau_-\tau_+)^{(u-1)/2}\tau_-(-\alpha_i)
& \mbox{$i\in I_-$ and odd $u\geq 0$}.
\end{cases}
\end{align}
It is known by \cite[Proposition 9.3]{FZ4}
that $\mathbf{d}(i,u)$ is a positive root of $X_r$
for $1\leq u \leq h$.
We naturally identify 
$\mathbf{d}(i,u)=\sum_{k\in I} d(i,u)_k\alpha_k$
 with the integer vector $(
d(i,u)_k)_{k\in I}$.

\par
1) {\em The case $0\leq u \leq h'-1$.}
Let $\mathbf{d}(i',u)=(d(i',u)_{k'})_{k'\in I'}$
 be the integer vector as above for $X'_{r'}$
with $I'=I'_+\sqcup I'_-$.
Let $\mathbf{\tilde{d}}(i',u)$ be the same vector
but for the opposite choice of the bipartite decomposition
of $I'$.
Forget temporarily all the arrows in the horizontal direction.
Then, one has
\begin{align}
\begin{split}
[y_{ii'}(u)]_{\mathrm{T}}
&=\prod_{k'\in I'} y_{ik'}^{d(\omega'(i'),h'-u)_{k'}},
\quad 
(\mathbf{i},u): \mathbf{P}_+, i\in I_+,\\
[y_{ii'}(u)]_{\mathrm{T}}
&=\prod_{k'\in I'} y_{ik'}^{\tilde{d}(\omega'(i'),h'-u)_{k'}},
\quad 
(\mathbf{i},u): \mathbf{P}_+, i\in I_-,
\end{split}
\end{align}
by applying \cite[Proposition 10.7]{FZ4} in our convention.
This remains to be true even in the presence of the horizontal arrows
 ({\em factorization property\/}),
because of Lemma \ref{lem:mutation} and
the positivity of vectors $\mathbf{d}(i',u)$ and $\mathbf{\tilde{d}}(i',u)$.
\par
2) {\em The case $-h \leq u \leq -1$.}
We consider mutations in the reverse direction
of $u$ at points $(\mathbf{i},u): \mathbf{P}_-$.
Let $\mathbf{d}(i,u)$
be the corresponding vector for $X_r$ with $I=I_+\sqcup I_-$,
and $\mathbf{\tilde{d}}(i,u)$ be the one
for the opposite choice of the bipartite decomposition
of $I$.
Then, repeating the same argument,
and also using Lemma \ref{lem:y} (1),
we obtain
\begin{align}
\begin{split}
[y_{ii'}(u)]_{\mathrm{T}}
&=\prod_{k\in I} y_{ki'}^{-d(\omega(i),h+u+1)_{k}},
\quad 
(\mathbf{i},u): \mathbf{P}_+, i'\in I'_-,\\
[y_{ii'}(u)]_{\mathrm{T}}
&=\prod_{k\in I} y_{ki'}^{d(\omega(i),h+u)_{k}},
\quad 
(\mathbf{i},u): \mathbf{P}_-, i'\in I'_-,\\
[y_{ii'}(u)]_{\mathrm{T}}
&=\prod_{k\in I} y_{ki'}^{-\tilde{d}(\omega(i),h+u+1)_{k}},
\quad 
(\mathbf{i},u): \mathbf{P}_+, i'\in I'_+,\\
[y_{ii'}(u)]_{\mathrm{T}}
&=\prod_{k\in I} y_{ki'}^{\tilde{d}(\omega(i),h+u)_{k}},
\quad 
(\mathbf{i},u): \mathbf{P}_-, i'\in I'_+.
\end{split}
\end{align}
This completes the proof of Proposition \ref{prop:monom}.
\end{proof}

\section{Proof of Theorem \ref{thm:DI3}}

We prove Theorem \ref{thm:DI3}
by applying the method of \cite{FS} with
mixture of ideas by \cite{CGT,C}.

The proof is divided into two steps.
First, we prove that the left hand side
of \eqref{eq:DI4} is independent of
a semifield homomorphism $\varphi$
({\em constancy or rigidity property}).
Next, we evaluate its value at the {\em $0/\infty$ limit}.

\subsection{Constancy property}

Let $A$ be a multiplicative abelian group.
The group  $A\otimes_{\mathbb{Z}}A$
is  the additive abelian group generated by $g\otimes h$
($g,h\in A$)
with relations
\begin{align}
(fg)\otimes h = f\otimes h + g\otimes h,
\quad h \otimes (fg) = h\otimes f + h \otimes g.
\end{align}
As a consequence, we also have the following relations
\begin{gather}
1\otimes h = h \otimes 1 = 0,\\
f^{-1}\otimes g = -f \otimes g,
\quad
g\otimes f^{-1} = - g\otimes f.
\end{gather}
Let $S^2A$ be the subgroup of $A\otimes_{\mathbb{Z}}A$
generated by $f\otimes f $ ($f\in A$),
and $ \bigwedge^2 A$ be the quotient
of $A\otimes_{\mathbb{Z}}A$ by $S^2A$.
In $ \bigwedge^2 A$ we use $\wedge$ instead of $\otimes$.

According to a very general theorem by \cite[Proposition 1]{FS}
(see also \cite{B,Zag1}),
the constancy property of the left hand side
of \eqref{eq:DI4} follows from the following fact.
\begin{proposition}
\label{prop:wedge}
In $\bigwedge^2 \mathbb{Q}_{\mathrm{sf}}(y)$,
we have
\begin{align}
\label{eq:wedge}
\sum_{(\mathbf{i},u)\in S_+}
y_{\mathbf{i}}(u)
\wedge
(1+y_{\mathbf{i}}(u))
=0.
\end{align}
\end{proposition}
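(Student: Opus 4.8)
The plan is to exploit the factorization property established in the proof of Proposition \ref{prop:monom} and to reduce the identity \eqref{eq:wedge} over $\mathbf{I}\times\mathbb{Z}$ to the already-handled cases where one of the two Dynkin diagrams is $A_1$. Concretely, I would first use the telescoping structure of the mutation dynamics. For a single mutation $\mu_k$ sending the coefficient tuple $y'$ to $y''$, the exchange relation \eqref{eq:coef} gives, in $\bigwedge^2\mathbb{Q}_{\mathrm{sf}}(y)$, a clean relation between $\sum_i y'_i\wedge(1+y'_i)$ and $\sum_i y''_i\wedge(1+y''_i)$; this is exactly the computation underlying \cite[Proposition 1]{FS} and \cite{CGT}. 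The point is that the difference of the two sums localizes to the mutated vertex $k$ and its neighbours, and using the half-periodicity of Theorem \ref{thm:period2} the contributions over a full period $S_+$ cancel in pairs, \emph{provided} one knows enough about the tropical signs to control which neighbours actually interact. So the first step is to write down this per-mutation identity carefully and observe that summing it along the mutation sequence \eqref{eq:QseqADE2} over $0\le u\le 2(h+h')-1$ telescopes.

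The second, and I expect main, step is to handle the horizontal/vertical decoupling. By Proposition \ref{prop:monom}(ii) and the factorization property in its proof, in the range $0\le u\le h'-1$ the tropical evaluations $[y_{\mathbf{i}}(u)]_{\mathrm{T}}$ factor according to the second index $i'$, with the first index $i$ acting as a spectator, and Lemma \ref{lem:mutation}(iii) guarantees there is no interaction in the horizontal direction; symmetrically for $-h\le u\le -1$ in the vertical direction. I would use this to split the sum in \eqref{eq:wedge}: over the ``positive'' block the wedge sum decomposes as a sum over $i\in I$ of the corresponding wedge sum for the rank-one problem $\mathbb{Y}(A_1, X'_{r'})$ (with both choices of bipartition of $I'$, depending on $i\in I_\pm$), and over the ``negative'' block it decomposes as a sum over $i'\in I'$ of the wedge sum for $\mathbb{Y}(X_r, A_1)$. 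The delicate part is that this factorization is a statement about \emph{tropical} evaluations, whereas \eqref{eq:wedge} lives in $\bigwedge^2$ of the full (non-tropical) universal semifield; one must check that the cancellation argument only ever needs the tropical signs — i.e., that at each mutation point the relevant $1+y$ factors split multiplicatively in the way dictated by $[y]_{\mathrm{T}}$ — which is precisely what Lemma \ref{lem:mutation} delivers. Alternatively, and perhaps more cleanly, I would avoid tropicalization here and directly argue that the non-tropical $y_{\mathbf{i}}(u)$ themselves, while not literally factoring, satisfy the Y-system $\mathbb{Y}(X_r,X'_{r'})$ whose right-hand side in \eqref{eq:Y2} is already a product of a horizontal and a vertical part, so the per-mutation wedge identity automatically organizes into horizontal and vertical pieces.

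The third step is then to invoke the known results: Frenkel--Szenes \cite{FS} and Chapoton \cite{C} (together with the periodicity input \cite{FZ3}) establish exactly \eqref{eq:wedge} for the pairs $(X_r,A_1)$ and, by the level-rank symmetry $Y_{ii'}(u)\leftrightarrow Y_{i'i}(u)^{-1}$ noted after \eqref{eq:Y2} together with the antisymmetry of $\wedge$, also for $(A_1,X'_{r'})$. Summing these rank-one identities over the spectator index — $i\in I$ in the positive block, $i'\in I'$ in the negative block — gives $\sum_{(\mathbf{i},u)\in S_+} y_{\mathbf{i}}(u)\wedge(1+y_{\mathbf{i}}(u))=0$, which is \eqref{eq:wedge}. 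I expect the real work to be in the bookkeeping of the second step: getting the ranges $[0,h'-1]$ and $[-h,-1]$ to together account for the whole period $S_+$ (this uses half-periodicity and Proposition \ref{prop:monom}(iii)), and making sure the two ``opposite bipartition'' variants match up correctly at the seams $u=0$ and $u=h'-1$. Everything else is the standard $\bigwedge^2$ manipulation from \cite{FS}, which I would not reproduce in detail.
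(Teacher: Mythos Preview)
Your reduction to the rank-one cases $(A_1,X'_{r'})$ and $(X_r,A_1)$ does not go through. The factorization established in the proof of Proposition~\ref{prop:monom} is a statement about the \emph{tropical} evaluations $[y_{\mathbf{i}}(u)]_{\mathrm{T}}$ only: for $0\le u\le h'-1$ the monomial $[y_{ii'}(u)]_{\mathrm{T}}$ involves only the initial variables $y_{ik'}$ in the $i$th column. The non-tropical elements $y_{\mathbf{i}}(u)\in\mathbb{Q}_{\mathrm{sf}}(y)$ enjoy no such decoupling. Already at $u=1$ in Example~\ref{example:A2A3} one has, for instance,
\[
y_{(2,1)}(1)=y_{21}\,(1+y_{11})(1+y_{31})\,\frac{y_{22}}{1+y_{22}},
\]
which mixes the columns $i'=1$ and $i'=2$; for larger $u$ the mixing only gets worse. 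The rank-one wedge identities of \cite{FS,C} live in $\bigwedge^2$ of the semifield generated by a single column of initial variables, and there is no homomorphism carrying them to the summands of \eqref{eq:wedge}. Lemma~\ref{lem:mutation} does not rescue this: it tells you when a tropical evaluation is \emph{unchanged} by a neighbouring mutation, not that the underlying semifield element factors. Your alternative --- that the right-hand side of \eqref{eq:Y2} is a product of a horizontal and a vertical piece --- constrains the product $y_{\mathbf{i}}(u-1)y_{\mathbf{i}}(u+1)$, not the individual $y_{\mathbf{i}}(u)$, so it does not organize the wedge sum into rank-one blocks either.

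The paper's proof avoids any reduction to smaller rank. It uses the $F$-polynomials to write both $y_{\mathbf{i}}(u)$ and $1+y_{\mathbf{i}}(u)$ as a tropical monomial times a ratio of $F$-polynomials, equations \eqref{eq:F2} and \eqref{eq:F3}. Substituting into \eqref{eq:wedge} and expanding bilinearly gives three kinds of terms: wedges of two tropical monomials vanish because each $[y_{\mathbf{i}}(u)]_{\mathrm{T}}$ is sign-coherent (Proposition~\ref{prop:monom}(i)); wedges of two $F$-polynomial ratios vanish by the symmetry argument of \cite{CGT} combined with the periodicity of the $F_{\mathbf{i}}(u)$; and the five mixed terms cancel among themselves via the Y-system. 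So the tropical input is used, but through global sign-coherence, not through a columnwise splitting. If you want to salvage a telescoping argument along the lines of your first step, the natural object is the $2$-form $\sum_{\mathbf{i},\mathbf{j}} B_{\mathbf{i}\mathbf{j}}\, y_{\mathbf{i}}\wedge y_{\mathbf{j}}$ in the spirit of \cite{FG}, whose behaviour under mutation produces exactly the terms $y_k\wedge(1+y_k)$; that route bypasses rank-one reduction entirely.
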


Motivated by \cite{CGT,C},
we use the {\em $F$-polynomials\/} of \cite{FZ4} to prove
Proposition \ref{prop:wedge}.

The {\em  $F$-polynomial}
 $F_{\mathbf{i}}(u)\in \mathbb{Q}_{\mathrm{sf}}(y)$
at $(\mathbf{i},u)$ 
 ($\mathbf{i}\in \mathbf{I}$, $u\in \mathbb{Z}$)
is defined by
the specialization of $[x_{\mathbf{i}}(u)]_{\mathrm{T}}$ at $x_{\mathbf{j}}
=x_{\mathbf{j}}(0)=1$ ($\mathbf{j}\in \mathbf{I}$).
(Caution: Do not make the tropical evaluation for the addition
in $\mathbb{Q}\mathbb{P}(x)$!)
It is represented as a polynomial in $y$ with integer coefficients
due to the Laurent phenomenon \cite[Proposition 3.6]{FZ4}.

For our matrix $B=B(X_r,X'_{r'})$,
it is convenient to define the incidence matrices
$M=(M_{\mathbf{i}\mathbf{j}})_{\mathbf{i},\mathbf{j}
\in \mathbf{I}}$
and
$M'=(M'_{\mathbf{i}\mathbf{j}})_{\mathbf{i},\mathbf{j}
\in \mathbf{I}}$
as
\begin{align}
M_{\mathbf{i}\mathbf{j}}
&=
\begin{cases}
1& i\sim j, i'=j',\\
0& \mbox{otherwise},
\end{cases}
\quad
M'_{\mathbf{i}\mathbf{j}}
=
\begin{cases}
1& i= j, i'\sim j',\\
0& \mbox{otherwise}.
\end{cases}
\end{align}
Note that they are {\em symmetric} matrices.

\begin{lemma}
\label{lem:F}
 (i) For $(\mathbf{i}, u):\mathbf{P}_+$,
 the following relations hold  in $\mathbb{Q}_{\mathrm{sf}}(y)$.
\begin{align}
\label{eq:F0}
F_{\mathbf{i}}(u)
&=F_{\mathbf{i}}(u-1),
\\
\label{eq:F1}
F_{\mathbf{i}}(u-1)F_{\mathbf{i}}(u+1)
&=
\left[\frac{y_{\mathbf{i}}(u)}{1+y_{\mathbf{i}}(u)}
\right]_{\mathrm{T}}
\prod_{\mathbf{j}\in \mathbf{I}}
 F_{\mathbf{j}}(u)^{M_{\mathbf{j}\mathbf{i}}}
+
\left[\frac{1}{1+y_{\mathbf{i}}(u)}
\right]_{\mathrm{T}}
\prod_{\mathbf{j}\in \mathbf{I}}
 F_{\mathbf{j}}(u)^{M'_{\mathbf{j}\mathbf{i}}},
\\
\label{eq:F2}
y_{\mathbf{i}}(u)
&=
[y_{\mathbf{i}}(u)]_{\mathrm{T}}
\frac{
\prod_{\mathbf{j}\in \mathbf{I}}
 F_{\mathbf{j}}(u)^{M_{\mathbf{j}\mathbf{i}}}
}
{
\prod_{\mathbf{j}\in \mathbf{I}}
 F_{\mathbf{j}}(u)^{M'_{\mathbf{j}\mathbf{i}}}
}.
%\\
% F_{\mathbf{i}}(u+2(h+h'))&= F_{\mathbf{i}}(u).
\end{align}
\par
(ii) Periodicity:
$
 F_{\mathbf{i}}(u+2(h+h'))= F_{\mathbf{i}}(u).
$
\par
(iii)
 Each polynomial $F_{\mathbf{i}}(u)$
has constant term 1.
\end{lemma}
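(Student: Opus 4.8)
The plan is to deduce all three parts from the standard separation-of-additions formula of \cite{FZ4}, which expresses cluster variables and coefficients in terms of $F$-polynomials and the tropical $g$- and $c$-data. First I would fix the mutation sequence \eqref{eq:QseqADE2} and recall that, by \cite[Corollary 6.3]{FZ4}, the cluster variable $x_{\mathbf{i}}(u)$ in the cluster algebra with principal-like coefficients $\mathbb{Q}_{\mathrm{sf}}(y)$ has the form $x_{\mathbf{i}}(u)=(\text{monomial in }x)\cdot F_{\mathbf{i}}(u)|_{\hat y}/F_{\mathbf{i}}(u)|_{\mathrm{T}}$, and likewise $y_{\mathbf{i}}(u)=[y_{\mathbf{i}}(u)]_{\mathrm{T}}\prod_{\mathbf{j}}F_{\mathbf{j}}(u)^{\hat B}$ with the exponents read off from the exchange matrix. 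The essential combinatorial input is that our matrix $B=B(X_r,X'_{r'})$ is skew-symmetric and has, by \eqref{eq:Bsq}, the property that $|B_{\mathbf{j}\mathbf{i}}|=M_{\mathbf{j}\mathbf{i}}+M'_{\mathbf{j}\mathbf{i}}$, with the "$+$ part'' (positive entries in a column, before the mutation $\mu_+$ or $\mu_-$ that creates $x_{\mathbf{i}}(u+1)$) being exactly the horizontal incidences $M$ and the "$-$ part'' exactly the vertical incidences $M'$ — or vice versa depending on the parity of $u$ and on $\varepsilon(\mathbf{i})$. This is where the hypothesis $(\mathbf{i},u):\mathbf{P}_+$ is used to pin down which of $M,M'$ plays which role; the identities are stated only for that parity class precisely so that the sign bookkeeping is uniform.

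Granting that, the three formulas come out as follows. Equation \eqref{eq:F0}: since the mutation $\mu_k$ that intervenes between $y(u-1)$ and $y(u)$ at a vertex $\mathbf{k}$ with $(\mathbf{k},u-1):\mathbf{P}_-$ does not touch $x_{\mathbf{i}}$ (it only touches vertices in $\mathbf{I}_{-\varepsilon(\mathbf{i})}$ when $(\mathbf{i},u):\mathbf{P}_+$), the cluster variable and hence its $F$-polynomial is unchanged; so $F_{\mathbf{i}}(u)=F_{\mathbf{i}}(u-1)$. Equation \eqref{eq:F1}: apply the exchange relation \eqref{eq:clust} for the mutation producing $x_{\mathbf{i}}(u+1)$ from the seed at "time $u$'', specialize all $x_{\mathbf{j}}(0)=1$, and rewrite the coefficient $y_{\mathbf{k}}/(1\oplus y_{\mathbf{k}})$ and $1/(1\oplus y_{\mathbf{k}})$ prefactors via their tropical evaluations — the two monomial products that appear are $\prod F_{\mathbf{j}}(u)^{M_{\mathbf{j}\mathbf{i}}}$ and $\prod F_{\mathbf{j}}(u)^{M'_{\mathbf{j}\mathbf{i}}}$ by the incidence identity above, and the left side is $F_{\mathbf{i}}(u-1)F_{\mathbf{i}}(u+1)$ using \eqref{eq:F0}. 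Equation \eqref{eq:F2} is just the coefficient separation formula specialized through the same incidence identity. Part (ii) is immediate from Theorem \ref{thm:period2}(i): periodicity of the $y$-pattern forces periodicity of the seeds up to a permutation, hence of the $F$-polynomials (and one checks the permutation is trivial on $F$'s, or invokes \cite[Theorem 8.2]{Kel1} directly in the $F$-polynomial form). Part (iii) is \cite[Proposition 3.6]{FZ4}: every $F$-polynomial has constant term $1$.

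The main obstacle I anticipate is the careful matching of signs and of the $M$ versus $M'$ roles: one must verify, for $(\mathbf{i},u):\mathbf{P}_+$, that in the relevant mutation $\mu_+$ or $\mu_-$ the set $\{\mathbf{j}: B_{\mathbf{j}\mathbf{i}}>0\}$ in the seed at time $u$ is precisely $\{\mathbf{j}: M_{\mathbf{j}\mathbf{i}}=1\}$ (the horizontal neighbours) and $\{\mathbf{j}: B_{\mathbf{j}\mathbf{i}}<0\}$ is precisely $\{\mathbf{j}: M'_{\mathbf{j}\mathbf{i}}=1\}$ (the vertical neighbours), together with the matching sign in the exponent of $[y_{\mathbf{i}}(u)]_{\mathrm{T}}$. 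This is a direct check from the explicit form \eqref{eq:Bsq} and the fact, recorded after Lemma (the $B$-mutation lemma), that $\mu_\pm(B)=-B$, so that the sign of each column entry simply flips with each step and is governed by the parity condition $\mathbf{P}_\pm$. Once that check is in place, the rest is formal manipulation inside $\mathbb{Q}_{\mathrm{sf}}(y)$ and an appeal to the cited results of Fomin–Zelevinsky and Keller.
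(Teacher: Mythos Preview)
Your arguments for parts (i) and (ii) are essentially the paper's: the paper also derives \eqref{eq:F0} and \eqref{eq:F1} directly from the cluster exchange relation \eqref{eq:clust} (your observation that the mutation between $u-1$ and $u$ does not touch $x_{\mathbf{i}}$ when $(\mathbf{i},u):\mathbf{P}_+$ is exactly the mechanism), obtains \eqref{eq:F2} from \cite[Proposition 3.13]{FZ4} (the coefficient separation formula you invoke), and cites Keller for the periodicity in (ii). Your discussion of the $M$ versus $M'$ sign bookkeeping is more explicit than the paper's, but the content is the same.

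Part (iii), however, has a genuine gap. You write that it ``is \cite[Proposition 3.6]{FZ4}: every $F$-polynomial has constant term $1$.'' That proposition is the Laurent phenomenon---it gives that $F_{\mathbf{i}}(u)$ is a polynomial in $y$ with integer coefficients, not that its constant term equals $1$. In \cite{FZ4} the constant-term-$1$ property is only a conjecture (equivalent, via \cite[Proposition 5.6]{FZ4}, to sign-coherence of the tropical $y$'s), not a theorem one can simply quote. The paper closes this gap in a way specific to the present setting: it proves (iii) by induction on $u$, using \eqref{eq:F0}, \eqref{eq:F1}, and crucially Proposition~\ref{prop:monom}(i), which guarantees that exactly one of the two tropical prefactors in \eqref{eq:F1} equals $1$ and the other is a monomial with no constant term---so the constant term on the right of \eqref{eq:F1} is the product of constant terms of the $F_{\mathbf{j}}(u)$ appearing in one of the two products, which is $1$ by induction. (The paper's Remark immediately following notes the alternative route via \cite[Theorem 1.7]{DWZ}, but even that is not \cite[Proposition 3.6]{FZ4}.) You should replace your one-line citation for (iii) with either this inductive argument or a correct external reference.
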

\begin{proof}
(i).
The first two relations  follow from \eqref{eq:clust}.
The last one is due to \cite[Proposition 3.13]{FZ4}.
(ii). This was shown by \cite{Kel1,Kel2}.
(iii).
For $u=0$, this is true by $F_{\mathbf{i}}(0)=1$.
Then, the claim is shown by induction
on $u$, by using
\eqref{eq:F0},
 \eqref{eq:F1},
and Proposition \ref{prop:monom} (i)
(cf. \cite[Proposition 5.6]{FZ4}).
\end{proof}

\begin{remark} Lemma \ref{lem:F} (iii) is also true
by \cite[Theorem 1.7]{DWZ}.
Then, Proposition \ref{prop:monom} (i) is a consequence of
Lemma \ref{lem:F} (iii) due to \cite[Proposition 5.6]{FZ4}.
\end{remark}

By \eqref{eq:F1} and \eqref{eq:F2},
we also have,
for $(\mathbf{i}, u):\mathbf{P}_+$,
\begin{align}
\label{eq:F3}
1+y_{\mathbf{i}}(u)
&=
[1+y_{\mathbf{i}}(u)]_{\mathrm{T}}
\frac{
 F_{\mathbf{i}}(u-1) F_{\mathbf{i}}(u+1)
}
{
\prod_{\mathbf{j}\in \mathbf{I}}
 F_{\mathbf{j}}(u)^{M'_{\mathbf{j}\mathbf{i}}}
}.
\end{align}
Now, we put \eqref{eq:F2} and \eqref{eq:F3} into
\eqref{eq:wedge}, and expand it.

Firstly,
\begin{align}
\sum_{(\mathbf{i},u)\in S_+}
[y_{\mathbf{i}}(u)]_{\mathrm{T}}
\wedge
[1+y_{\mathbf{i}}(u)]_{\mathrm{T}}
=0,
\end{align}
since each monomial
$[y_{\mathbf{i}}(u)]_{\mathrm{T}}$ is either positive or negative
by Proposition \ref{prop:monom}.

Secondly, the contributions from the terms involving
only $F_{\mathbf{i}}(u)$'s vanish due to the symmetry argument of
\cite[Section 3]{CGT}.
For example,
\begin{align}
\sum_{(\mathbf{i},u)\in S_+}
\prod_{\mathbf{j}\in \mathbf{I}}
F_{\mathbf{j}}(u)^{M_{\mathbf{j}\mathbf{i}}}
\wedge
F_{\mathbf{i}}(u-1)F_{\mathbf{i}}(u+1)
=
\sum_{(\mathbf{i},u)\in S_+}
F_{\mathbf{i}}(u-1)F_{\mathbf{i}}(u+1)
\wedge
\prod_{\mathbf{j}\in \mathbf{I}}
F_{\mathbf{j}}(u)^{M_{\mathbf{j}\mathbf{i}}}
\end{align}
by changing the variables twice;
therefore, it vanishes.

Thirdly, the contribution from
the remaining five terms cancel due to
the Y-system \eqref{eq:Y2}:
\begin{align}
\sum_{(\mathbf{i},u)\in S_+}
[y_{\mathbf{i}}(u)]_{\mathrm{T}}
\wedge
F_{\mathbf{i}}(u-1)
&=
\sum_{(\mathbf{i},u)\in S_-}
[y_{\mathbf{i}}(u+1)]_{\mathrm{T}}
\wedge
F_{\mathbf{i}}(u),
\\
\sum_{(\mathbf{i},u)\in S_+}
[y_{\mathbf{i}}(u)]_{\mathrm{T}}
\wedge
F_{\mathbf{i}}(u+1)
&=
\sum_{(\mathbf{i},u)\in S_-}
[y_{\mathbf{i}}(u-1)]_{\mathrm{T}}
\wedge
F_{\mathbf{i}}(u),
\\
-\sum_{(\mathbf{i},u)\in S_+}
[y_{\mathbf{i}}(u)]_{\mathrm{T}}
\wedge
\prod_{\mathbf{j}\in \mathbf{I}}
F_{\mathbf{j}}(u)^{M'_{\mathbf{j}\mathbf{i}}}
&=
\sum_{(\mathbf{i},u)\in S_-}
\prod_{\mathbf{j}\in \mathbf{I}}
[y_{\mathbf{j}}(u)]_{\mathrm{T}}^{-M_{\mathbf{j}\mathbf{i}}'}
\wedge
F_{\mathbf{i}}(u),
\\
-\sum_{(\mathbf{i},u)\in S_+}
[1+y_{\mathbf{i}}(u)]_{\mathrm{T}}
\wedge
\prod_{\mathbf{j}\in \mathbf{I}}
F_{\mathbf{j}}(u)^{M_{\mathbf{j}\mathbf{i}}}
&=
\sum_{(\mathbf{i},u)\in S_-}
\prod_{\mathbf{j}\in \mathbf{I}}
[1+y_{\mathbf{j}}(u)]_{\mathrm{T}}^{-M_{\mathbf{j}\mathbf{i}}}
\wedge
F_{\mathbf{i}}(u),
\\
-\sum_{(\mathbf{i},u)\in S_+}
[1+y_{\mathbf{i}}(u)]_{\mathrm{T}}
\wedge
\prod_{\mathbf{j}\in \mathbf{I}}
F_{\mathbf{j}}(u)^{-M'_{\mathbf{j}\mathbf{i}}}
&=
\sum_{(\mathbf{i},u)\in S_-}
\prod_{\mathbf{j}\in \mathbf{I}}
[1+y_{\mathbf{j}}(u)]_{\mathrm{T}}^{M'_{\mathbf{j}\mathbf{i}}}
\wedge
F_{\mathbf{i}}(u).
\end{align}
This completes the proof of Proposition \ref{prop:wedge}.

\begin{remark}
Eq.~\eqref{eq:wedge} was studied in \cite[Eq.~(3.32)]{CGT}
by using the parametrization of $y_{\mathbf{i}}(u)$
by the {\em  T-system}. However,
it is known that this does not give a `general' solution
of the Y-system \cite[remark after Proposition 3.8]{IIKNS}.
Therefore, we use $F$-polynomials instead of the T-system.
\end{remark}

\subsection{Evaluation in the $0/\infty$ limit}
Following \cite{C}, we evaluate
the value
 of the left hand side
of \eqref{eq:DI4}
in the limit such that
each  $\varphi(y_{\mathbf{i}}(u))$
goes to either zero or infinity
(the {\em $0/\infty$ limit}).
Then, the value is equal to
the number of the variables which 
go to infinity due to \eqref{eq:L1}.

Thanks to Proposition
\ref{prop:monom},
we already have such a limit at hand.
Take the one parameter family 
of semifield homomorphisms $\varphi_t:\mathbb{Q}_{\mathrm{sf}}(y)
\rightarrow \mathbb{R}_+$ ($0<t<1$)
defined by $\varphi_t(y_{\mathbf{i}})=t$
for any $\mathbf{i}\in I$.
Then, in the limit $t\rightarrow 0$,
$\varphi_t(y_{\mathbf{i}}(u))$ is 0
if $[y_{\mathbf{i}}(u)]_{\mathrm{T}}$ is positive
and $\infty$
if $[y_{\mathbf{i}}(u)]_{\mathrm{T}}$ is negative,
due to \eqref{eq:F2} and Lemma \ref{lem:F} (iii).
Therefore, the value of
 the left hand side
of \eqref{eq:DI4} is $N_-=hrr'$ by
Proposition \ref{prop:monom}.

This completes the proof
of Theorem \ref{thm:DI3}.

\end{document}